\def\bE{{\mathbf E}}
\def\bC{{\mathbf C}}
\def\ff{{\mathfrak f}}
\def\Gm{{\mathbb{G}_m}}
\newcommand{\bG}{ \mathbf{G} }
\def\qme{{\barM^{\epsilon}_{g,\ell}}}
\numberwithin{equation}{section}
\def\sO{{\mathscr O}}
\def\sL{{\mathscr L}}
\def\sO{\mathscr{O}}
\def\sE{\mathscr{E}}
\def\sR{\mathscr{R}}
\newcommand{\CC}{\mathbb{C}}
\newcommand{\EE}{\mathbb{E}}
\newcommand{\LL}{\mathbb{L}}
\newcommand{\PP}{\mathbb{P}}
\newcommand{\QQ}{\mathbb{Q}}
\newcommand{\ZZ}{\mathbb{Z}}
\newcommand{\GG}{\mathbb{G}}
\newcommand{\fP}{\mathbb{M}}
\newcommand{\TT}{\mathbb{T}}
\newcommand{\DD}{\mathbb{D}}
\newcommand{\bx}{\mathbf{x}}
\newcommand{\cal}{\mathcal}
\def\ee{\mathfrak e}
\def\cB{{\cal B}}
\def\cC{{\cal C}}
\def\cD{{\cal D}}
\def\cE{{\cal E}}
\def\cH{{\cal H}}
\def\cK{{\cal K}}
\def\cL{{\cal L}}
\def\cN{{\cal N}}
\def\cO{{\cal O}}
\def\cP{{\cal P}}
\def\cQ{{\cal Q}}
\def\cU{{\cal U}}
\def\cV{{\cal V}}
\def\cX{{\cal X}}
\def\cZ{{\cal Z}}
\def\fC{\mathfrak{C}}
\def\fX{\mathfrak{X}}
\def\fY{\mathfrak{Y}}
\def\ff{\mathfrak{f}}
\def\sP{{\mathscr P}}
\def\mapright#1{\,\smash{\mathop{\lra}\limits^{#1}}\,}
\def\dual{^{\vee}}
\def\sta{^\ast}
\def\virt{^{\mathrm{vir}}}
\def\sta{^{\ast}}
\def\sta{^*}
\def\lra{\longrightarrow}
\def\lsta{_{\ast}}
\newcommand{\ep}{\epsilon}
\newcommand{\lam}{\lambda}
\def\begeq{\begin{equation}}
\def\endeq{\end{equation}}
\def\and{\quad{\rm and}\quad}
\def\bl{\bigl(}
\def\br{\bigr)}
\def\defeq{:=}
\def\sub{\subset}
\def\Ao{{\mathbb A}^{\!1}}
\def\and{\quad\text{and}\quad}
\DeclareMathOperator{\pr}{pr}
 \DeclareMathOperator{\rank}{rank}
\newtheorem{prop}{Proposition}[section]
\newtheorem{theo}[prop]{Theorem}
\newtheorem{lemm}[prop]{Lemma}
\newtheorem{coro}[prop]{Corollary}
\newtheorem{defi}[prop]{Definition}
\newtheorem{defi-prop}[prop]{Definition-Proposition}
\DeclareMathOperator{\coker}{coker}
\def\Ob{\cO b}
\def\bul{^\bullet}
\def\Pn{{\mathbb P}^n}
\def\sta{^\ast}
\let\lab=\label
\def\sO{{\mathscr O}}
\def\sR{{\mathscr R}}
\def\beq{\begin{equation}}
\def\eeq{\end{equation}}
\def\Pf{{\PP^4}}
\def\fP{{\mathfrak P}}
\def\fZ{{\mathfrak Z}}
\def\bee{\begin{equation}}
\def\eeq{\end{equation}}
\def\sC{{\mathscr C}}
\def\fN{{\mathfrak N}}
\def\ti{\tilde}
\def\fg{{\mathfrak g}}
\def\barM{{\overline{M}}}
\def\fq{\mathfrak q}
\def\cpdp{\cP} 
\def\cqg{{\cQ_{g}}}
\def\cng{{\cN_{g}}}
\def\cngp{\cN} 
\def\Vb{\mathrm{Vb}}
\def\cpg{\cP}
\def\cpgp{\cP}
\def\cvgp{\cV} 
\def\AA{\mathbb A}
\def\lAo{_{\Ao}}
\let\lab=\label
\title{Invariants of
stable quasimaps with fields}
\author{Huai-Liang Chang}
\author{Mu-lin Li}
\address{Mathematics Department, Hong Kong University of Science and Technology}
\address{College of Mathematics and Econometrics, Hunan University}
\thanks{H.L. Chang was partially supported by HK GRF grant 16301515 and 16301717;  M.L. Li was partially supported by the Start-up Fund of Hunan University.}
\begin{document}
\maketitle

\begin{abstract} For arbitrary smooth hypersurface $X\subset\PP^n$
we construct moduli of   quasimaps with P fields.
Apply Kiem-Li's cosection localization
we obtain a virtual fundamental class. We show the class coincides, up to sign,
with that of moduli of   quasimaps to $X$. This
generalizes Chang-Li's \cite{CL} numerical identity to the cycle level, and from Gromov Witten invariants to quasimap invariants.
\end{abstract}
\section{introduction}
 

  Recently there are more approaches to GW invariants using Landau Ginzburg theory. One example is Jun Li and the first author's work \cite{CL} identifying quintics' GW invariants with its GW invariants of maps with fields, which can be regarded as enumerating curves ``in" the LG space $(K_\Pf,W)$, where $W$ is a regular funciton on $K_\Pf$ induced by quintic polynomials.
Such point of view enables some applications: (i)  providing  an algebraic proof of  Li-Zinger relation  \cite{CL1} so that $g=1$ GW invariants can be approached by reduced  $g=1$ GW invariants (recover \cite{LZ} algebraically); (ii)  an all genus wall crossing from CY's GW invariants to FJRW invariants can be realized by the moduli of Mixed-Spin-P fields (\cite{CLLL,CLLL1})\footnote{this also gives another way to obtain Zinger's genus one formula \cite{Zi} of quintic GW;}.

  Recently, Ionut Ciocan-Fontanine, Bumsig Kim \cite{FK1} used stable quasimaps to give a mathematical description of $B$-side(also see Clader-Janda-Ruan \cite{CJR}).  They use all genus wall crossing to provide relations between GW and quasimap invariants. The relations is interpreted as mirror maps.
  Then using that $\ep=0^+$-quasimap moduli admits bundle's euler class description, B. Kim and H. Lho \cite{K-L}   also obtain quintics $g=1$ GW invariants recovering Zinger's formula. This generalized stability is also used by Fan, Jarvis, Ruan  in Gauged Linear Sigma Model \cite{FJR}.


  In this paper  we consider  the moduli of $\epsilon$-stable quasimaps to $\PP^n$ with fields(also c.f. \cite{FJR}). Similar to \cite{CL} in many aspects, the moduli spaces are   not proper for positive genus, and admit virtual cycles by  using Kiem-Li's cosection localization. Our main theorem is that these virtual cycles coincide (up to signs) with the virtual cycles of the $\epsilon$-stable quasimaps moduli of the smooth hypersurface $X=(q(x)=0)\subset \PP^n$ with $\deg q=m$.   This generalizes the result in \cite{CL} to arbitrary hypersurfaces, with markings, and lifts  the identity between invariants (\cite[Thm.\, 1.1]{CL}) to identity between virtual cycles. \black

We briefly outline our construction and the theorem.  Let $q=q(x)$ be a degree $m$ homogenus polynomial. Assume the hypersurface   $X:=(q=0)\subset\PP^n$  is smooth. Let $\qme(\PP^n,d)$ be the moduli of genus $g$ degree $d$ $\epsilon$-stable quasimaps to $\PP^n$. Every closed point of $\qme(\PP^n,d)$ is of the form
$$\xi=( C, p_1,\cdots p_\ell\in C, L\in Pic(C), u\in \Gamma(L^{\oplus n+1})).$$
We let $\qme(\PP^n,d)^p$ be the moduli of $(\xi,p)$ where $p\in\Gamma(L^{\vee\otimes m}\otimes\omega_{C})$.
  It forms a Deligne-Mumford stack. 
When $g$ is positive, it is not proper. Parallel to \cite{CL},  the homogeneous polynomial $q(x)$ determines a cosection  of its obstruction sheaf:  $$\sigma : \Ob_{\qme(\PP^n,d)^p}\lra \sO_{\qme(\PP^n,d)^p}.
$$
 The non-surjective loci (the degeneracy loci) of the cosection is
$$
\qme(X,d)\sub \qme(\PP^n,d)^p,  $$
and is proper. The cosection localization of Kiem-Li  induces a  virtual cycle
$$[\qme(\PP^n,d)^p]\virt\in A_{\delta} \qme(X,d).
$$
\begin{theo}\label{theo1}
For $g\ge0$, $\epsilon>0$ and $\ell$ be a nonnegative integer, then we have
$$
[\qme(\PP^n,d)^p]\virt=(-1)^{m d+1-g}[\qme(X,d)]\virt\in A_{\delta} \qme(X,d).
$$
\end{theo}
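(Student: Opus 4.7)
The plan is to follow the strategy of \cite{CL}, adapted to the quasimap setting and to arbitrary hypersurfaces of degree $m$. The argument proceeds in three stages: constructing a perfect relative obstruction theory on $\qme(\PP^n,d)^p$, exhibiting the cosection $\sigma$ induced by $q$ and identifying its degeneracy locus as $\qme(X,d)$, and comparing the Kiem-Li cosection-localized virtual class with the intrinsic virtual class $[\qme(X,d)]\virt$ of \cite{FK1}.

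First, relative to the smooth Artin stack $\fM$ parametrizing $(C,p_1,\dots,p_\ell,L)$ with $L$ of degree $d$, the obstruction sheaf of $\qme(\PP^n,d)^p$ at a closed point $(\xi,p)$ decomposes as $H^1(C,L^{\oplus n+1}) \oplus H^1(C,L^{\vee\otimes m}\otimes\omega_C)$. The cosection $\sigma$ is built from two natural pairings: on the quasimap direction by contraction with $p\cdot dq(u)$, and on the P-field direction through Serre duality against $q(u) \in H^0(C,L^{\otimes m})$. A pointwise check using smoothness of $X$ shows that $\sigma$ fails to be surjective precisely when $q(u) \equiv 0$ as a section of $L^{\otimes m}$ and $p=0$, identifying the degeneracy locus with $\qme(X,d) \hookrightarrow \qme(\PP^n,d)^p$; properness of this locus follows from the projectivity of $X$ together with $\epsilon$-stability.

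The cycle-level comparison is the core technical step. On $\qme(X,d)$ one constructs a commutative diagram relating the ambient perfect obstruction theory of $\qme(\PP^n,d)^p$, restricted to $\qme(X,d)$, to the intrinsic one of \cite{FK1}; the difference is controlled by the two-term complex $[H^0(C,L^{\otimes m}) \to H^1(C,L^{\vee\otimes m}\otimes\omega_C)]$, and the cosection $\sigma$ encodes the pairing $p \mapsto q(u)\cdot p$ between these pieces. Applying the functorial properties of Kiem-Li cosection-localized Gysin maps along this decomposition, the localized class $[\qme(\PP^n,d)^p]\virt$ matches $[\qme(X,d)]\virt$ up to a Koszul sign $(-1)^r$, where $r$ is the virtual rank of the P-field contribution. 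By Riemann-Roch, $\chi(C,L^{\otimes m}) = md+1-g$, yielding the sign $(-1)^{md+1-g}$.

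The main technical obstacles are twofold. First, because $\epsilon$-stable quasimaps admit isolated base points, $q(u)$ must be interpreted as a section of $L^{\otimes m}$ rather than as a morphism to $X$, and the cosection has to be defined and shown well-behaved at points where $u$ meets the base locus; here $\epsilon$-stability ensures base points are isolated and disjoint from nodes and markings, so that the local-to-global argument of \cite{CL} extends. Second, upgrading from the numerical identity of \cite[Thm.\,1.1]{CL} to a cycle-level identity in $A_\delta(\qme(X,d))$ demands working with cosection-localized Gysin maps at the Chow-group level and verifying the compatibility of the two obstruction theories as morphisms of complexes, not merely as numerical data; this is the main place where the quintic-specific arguments of \cite{CL} require substantive modification.
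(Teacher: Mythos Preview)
Your outline correctly identifies the obstruction theory, the cosection, and the degeneracy locus, but the core comparison step has a genuine gap: you omit the degeneration argument that is the engine of both \cite{CL} and the present paper.

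The difficulty is that $\cP=\qme(\PP^n,d)^p$ is \emph{not} fibered over $\qme(X,d)$; only the degeneracy locus of $\sigma$ lies there. So there is no direct way to ``restrict the ambient obstruction theory of $\cP$ to $\qme(X,d)$'' and compare normal cones, as you propose. Moreover the cosection at a point $(\xi,p)$ is
\[
\sigma_1(\mathring u,\mathring p)=p\cdot\sum_i \partial_i q(u)\,\mathring u_i+\mathring p\cdot q(u),
\]
which couples the quasimap and $P$-field directions through the derivatives of $q$. This coupling prevents the clean Koszul-type splitting you invoke. What the paper does (following \cite{CL}) is build a one-parameter family $\cV\to\mathbb A^1$ whose generic fibre is $\cP$ and whose central fibre $\cN$ parametrizes $\epsilon$-stable quasimaps to $X$ together with an extra pair of sections $(y,p)\in H^0(L^{\otimes m})\times H^0(L^{\vee\otimes m}\otimes\omega_C)$. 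Over $t=0$ the cosection decouples to the pure Serre pairing $(\mathring y,\mathring p)\mapsto y\cdot\mathring p+\mathring y\cdot p$, with no $dq$ term. Constancy of the cosection-localized class along $\mathbb A^1$ (the paper's Theorem~\ref{constancy}) reduces the problem to comparing $[\cN]\virt$ with $[\qme(X,d)]\virt$.

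That last comparison is then handled by a general cycle-level statement (Proposition~\ref{main1}): for any locally free $\cE$ with a Serre self-duality $\tau:\cE\cong\cE^\vee\otimes\omega$, the cosection-localized virtual pullback along the bundle $C(\pi_*\cE)\to R$ acts on cycles by $(-1)^{\chi(\cE)}$. Applied with $\cE=L^{\otimes m}\oplus L^{\vee\otimes m}\otimes\omega_C$ and $R=\qme(X,d)$ this yields the sign $(-1)^{md+1-g}$. Your two-term complex $[H^0(L^{\otimes m})\to H^1(L^{\vee\otimes m}\otimes\omega_C)]$ is morally the right bookkeeping object, but it only becomes the governing structure \emph{after} the degeneration has killed the $p\cdot dq(u)$ term; without that step there is no mechanism in your outline to pass from the intrinsic normal cone of $\cP/\cD$ to a cone over $\qme(X,d)$.
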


 In  case  $X$ is the quintic CY hypersurface and $\ell=0$, we have virtual dimension $\delta=0$. Taking degree of the identity   shows that
 the quasimap invariant of quintics $\deg [\qme(X,d)]\virt$ admits a Landau-Ginzburg alternate construction $\deg [\qme(\PP^n,d)^p]\virt$.
 When $\ep=\infty$, this recovers the main theorem in \cite{CL}.

{\bf Notation }
  Every stack in this paper is over $\CC$. Fixing  $g$ and $\ell$ in whole paper, we use  $\cD:=\mathcal{D}_{g,\ell}$ to denote the smooth Artin stack of $\ell$-pointed nodal curve with a line bundle (c.f. Definition \ref{cD}). The symbol  $\TT$ denotes the (relevent) tangent complex, which is the derived dual of cotangent complex $\LL$.

{\bf Acknowledgement}  The authors are greatful to Jun Li, Young Hoon Kiem, Ionut Ciocan-Fontanine and Yongbin Ruan
for helpful discussions.  The authors learn that B. Kim and J. OH have obtained the same result using localized chern characters, a method very different from ours. \black

\section{deformation theory of Quasimaps  via direct image cones}




\subsection{Quasimaps to $\PP^n$ and smooth hypersurface}
 Fix a smooth hypersurface   $X=(q=0)$ of $\Pn$ where $q$ is a degree $m$ homogeneous polynomial. We briefly recall the $\epsilon$-stable quasimaps (defined in \cite[Definition 3.1.1]{FK0}) to $\Pn$ and to $X$. 

\begin{defi} \label{stable qmap}   A   prestable   quasimap  to $\PP^n$    consists of
$$( C,p_1,\dots ,p_\ell , L, \{ u_i\}_{i=1}^{n+1}),$$
where
\begin{itemize}
\item $(C,p_1,\dots ,p_\ell )$ is a connected nodal curve of genus $g$ with $\ell$ markings,
\item $L$ is a degree-$d$ line bundle over $C$ with $u_1,\cdots,u_{n+1} \in \Gamma (C, L )$,
\item (nondegeneracy) the set $B\subset C$ of zeros of $(u_1,\cdots,u_{n+1})$
is finite and disjoint from the nodes and markings on $C$.


\end{itemize}

\end{defi}


  Denote $u=(u_1,\cdots,u_{n+1}):\sO_C^{\oplus (n+1)}\to L$.

\begin{defi}The length at $z\in C$ of a quasimap $( C,p_1,\cdots,p_\ell ,  L, \{ u_i\}_{i=1}^{n+1})$ is
$$
leng(z):=\text{length}_{z}(\text{coker}(\sO_C^{\oplus (n+1)}\to L)).
$$
\end{defi}
\black



 Let $\ep$ be a positive rational number.

\begin{defi}\label{stability}  A prestable quasimap $( C,p_1,\dots ,p_\ell ,  L, \{ u_i\}_{i=1}^{n+1})$ is $\epsilon$-stable if
\begin{enumerate}

\item $\omega _C(p_1+\dots +p_\ell ) \otimes  L^\epsilon $ is ample.
\item $\epsilon \, leng   (z)\le 1$ for every point $z$ in $C$.

\end{enumerate}
\end{defi}
 As in \cite{FK} we denote by $\epsilon=0^{+}$ when $\epsilon\to 0$. 


\begin{defi}\label{family} A family of genus $g$ degree $d$ $\epsilon$-stable quasimaps to $\PP^n$ over a   scheme $S$ consists of the data
$$( \pi:\cC_S \to S, \{p_i: S \rightarrow \cC_S\}_{i=1,\dots,\ell}, \sL_S, \{u_j\}_{j=1}^{n+1})$$
where
\begin{itemize}
\item $\pi:\cC_S \rightarrow S$ is a flat family of connected nodal curves over $S$,
\item  $p_1,\cdots,p_\ell $ are  sections of $\pi$,
\item $\sL_S$ is a line bundle over $\cC_S$ with  degree $d$ along fibers of $\cC_S/S$,
\item $u_j\in\Gamma(\cC_S,\sL_S)$ for $i=1,\cdots,n+1$,
\end{itemize}
such that the restriction to every geometric fiber $\cC_s$ of $\pi$ is an $\epsilon$-stable $\ell$-marked quasimap of genus $g,\epsilon$ and $d$.
 An arrow from $(\cC_S / S, \cdots)$ to $(\cC'_{S} / S, \cdots)$
are $S$-isomorphisms  $f:\cC_S \rightarrow \cC'_{S}$  and
$\sigma_f: \sL_S \rightarrow f^*\sL'_{S}$ preserving markings and sections.
\end{defi}
 Given a scheme $S$, denote $\qme(\PP^n,d)(S)$ to be the set of all   $(\cC_S/S,\{p_i\}, \sL_S, \{u_j\})$.  Then, with naturally defined arrows,  $\qme(\PP^n,d)$ forms a groupoid fibered over the category of schemes.
 By \cite[Theorem 4.0.1]{FK0}, or \cite[Example 3.1.4]{FKM} and \cite[Thm.\, 7.1.6]{FKM}, we have  \black 

\begin{lemm}\label{equi} The groupoid $\qme(\PP^n,d)$ is  a DM stack, proper over $\CC$. 
\end{lemm}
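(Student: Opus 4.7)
The plan is to verify each of the three assertions in turn: that $\qme(\PP^n,d)$ is a stack, that it is Deligne--Mumford, and that it is proper over $\CC$.

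For stackiness, one checks fpqc descent of the data. A family consists of a flat family of nodal curves with sections, a line bundle on it, and global sections of that line bundle, all of which satisfy effective descent. The nondegeneracy condition in Definition \ref{stable qmap} is open on the base and hence preserved, as are the two $\epsilon$-stability conditions of Definition \ref{stability}. Isomorphisms descend as well, since they are $S$-isomorphisms of schemes together with compatible isomorphisms of line bundles preserving sections and markings.

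For the Deligne--Mumford property, the crucial point is finiteness and reducedness of automorphism groups. Given an $\epsilon$-stable quasimap $\xi=(C,\{p_i\},L,\{u_j\})$, condition (1) says $\omega_C(\sum p_i)\otimes L^{\ep}$ is ample as a $\QQ$-line bundle; the group $\Aut(\xi)$ embeds into the automorphism group of this polarized pointed curve and is therefore finite and reduced. To exhibit an \'etale atlas, one presents $\qme(\PP^n,d)$ as an open substack of a relative parameter space over the smooth Artin stack $\cD=\cD_{g,\ell}$ of pointed nodal curves with a line bundle: above a $\cD$-chart the sections $(u_1,\dots,u_{n+1})$ live in the relative linear system of $\sL_S^{\oplus(n+1)}$, which becomes coherent on suitable open substrata, and the nondegeneracy and stability loci cut out open subsets whose formation commutes with base change.

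For properness, we apply the valuative criterion. Let $R$ be a discrete valuation ring with fraction field $K$, and let $\xi_K$ be an $\epsilon$-stable family over $\Spec K$. Existence is obtained in three steps: (i) extend the $\ell$-pointed nodal curve using properness of $\overline{\cM}_{g,\ell}$ (after a finite base change of $R$ if needed); (ii) extend the line bundle using smoothness of the relative Picard stack, adjusting by a divisor supported on the central fiber; (iii) extend the sections $u_j$ by clearing denominators via the uniformizer. The resulting prestable extension may fail $\epsilon$-stability if a base point on the central fiber has length exceeding $1/\epsilon$; one then performs a canonical sequence of modifications --- blowing up the special fiber to distribute the excess length along a chain of rational components, and contracting $\epsilon$-unstable rational tails --- to reach a canonical $\epsilon$-stable extension. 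Separatedness follows because this minimal model is pinned down by $\xi_K$ together with the stability condition.

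The main obstacle is precisely the last step: producing a well-defined minimal $\epsilon$-stable extension and proving uniqueness. This is essentially the content of \cite[Thm.\,4.0.1]{FK0} (and the general framework in \cite[Thm.\,7.1.6]{FKM}), and Lemma \ref{equi} is the special case of the target $\PP^n$ with the tautological polarization $\sO(1)$; we simply invoke those results.
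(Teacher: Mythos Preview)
Your proposal is correct and ultimately rests on the same citations the paper uses: the paper's own proof is simply an invocation of \cite[Thm.\,4.0.1]{FK0} (equivalently \cite[Example 3.1.4, Thm.\,7.1.6]{FKM}) with no further argument. You reach the same endpoint, just with an added sketch of the standard verification of stackiness, finiteness of automorphisms, and the valuative criterion before invoking those references.
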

 We recall the notion of the $\epsilon$-stable quasimaps  to the hypersurface $X$ (of $\Pn$).  
 \footnote{when $\epsilon=0^{+}$ the definition is exactly the stable quotient to $X$ defined in \cite[Sect. 10.1]{MOP}.}

\begin{defi}\label{family1} A family of genus $g$ degree $d$ $\epsilon$-stable quasimaps to $X$ over a base scheme $S$ consists of the data
$$(\pi: \cC_S \rightarrow S, \{p_i: S \rightarrow \cC_S\}_{i=1}^\ell, \sL_S, \{u_i\}_{i=1}^{n+1})\in \qme(\PP^n,d)(S)$$
where  the section $q(u_i)\in\Gamma(\cC_S,\sL_S^{\otimes m})$ vanishes.

 Arrows are defined the same as Definition \ref{family}.
\end{defi}

 Let  $\qme(X,d)(S)$   be the set of all  families in above definition. With naturally defined arrows $\qme(X,d)$ forms a closed substack of $\qme(\PP^n,d)$. 


 \subsection{Deformation theory}
\subsubsection{Deformation theory via the direct image cone}

  We recall the direct image cone construction in \cite{CL}. Let $\pi:\cC\to \fY$ be a flat family of connected nodal genus g curves, where $\fY$ is a  smooth  Artin stack. Let $\cZ  $ be an Artin stack
representable and quasi-projective over $\cC$. Define a groupoid $\fX  $ as follows. For any scheme $S\to\fY$,   denote
$\cC_S=\cC\times_\fY S$ and $\cZ_S=\cZ\times_\cC  \cC_S$. Then $\cZ\to\cC$ induces the projection $\pi_S:\cZ_S\to \cC_S$. Let
$$\fX{}(S)=\{ s:\cC_S\to \cZ_S\mid s \text{ are $\cC_S$-morphisms}\,\}.
$$
The arrows are defined by pullbacks. \black

\begin{prop}\label{ZA}\cite[Prop.\, 2.3]{CL}
The groupoid $\fX{}$ is an Artin stack with a natural representable and quasi-projective  morphism to $\fY$.
\end{prop}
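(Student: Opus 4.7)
The plan is to verify representability by reducing, along the definition, to the case of a scheme-theoretic base. It suffices to show that for every scheme $T$ with a morphism $T\to\fY$, the fiber product $\fX\times_{\fY}T$ is represented by a quasi-projective $T$-scheme. After base change, $\cC_T:=\cC\times_{\fY}T\to T$ is a proper flat family of connected nodal curves and $\cZ_T:=\cZ\times_{\cC}\cC_T\to \cC_T$ is representable and quasi-projective; by definition $\fX\times_{\fY}T$ represents the relative Section functor $\mathrm{Sec}(\cZ_T/\cC_T/T)$. I would then invoke Grothendieck's theory of Hilbert schemes (in its extension to algebraic spaces by Artin-Olsson, which is all we need since $\cC_T$ is an algebraic space).

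First I would relatively compactify: choose a dense open immersion $\cZ_T\hookrightarrow \overline{\cZ}_T$ over $\cC_T$ with $\overline{\cZ}_T\to \cC_T$ projective, using Nagata-style compactification (available because $\cC_T$ is Noetherian and $\cZ_T\to\cC_T$ is quasi-projective; one may work Zariski-locally on $\cC_T$ and then glue via the uniqueness of the section functor). Any section $s$ of $\cZ_T\to \cC_T$ corresponds bijectively to its graph, a closed subscheme $\Gamma_s\subset\overline{\cZ}_T$ flat over $T$, whose projection $\Gamma_s\to \cC_T$ is an isomorphism, and which factors through the open $\cZ_T\subset\overline{\cZ}_T$. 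Since $\overline{\cZ}_T\to T$ is projective, $\mathrm{Hilb}(\overline{\cZ}_T/T)$ exists as a disjoint union of projective $T$-schemes indexed by Hilbert polynomial, and graphs of sections embed into it.

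Next I would cut out the Section functor as an open subscheme. The condition that $\Gamma\to\cC_T$ be an isomorphism is open on the Hilbert scheme: it is fiberwise, and being an isomorphism between proper flat families over a base is an open condition. The further condition that $\Gamma$ avoid the closed complement $\overline{\cZ}_T\setminus\cZ_T$ is also open on the Hilbert scheme: its failure locus is the image, under the proper morphism $\cC_T\to T$, of the closed preimage in $\cC_T$ of the incidence locus, hence closed. Intersecting these two open conditions realizes $\fX\times_{\fY}T$ as an open subscheme of $\mathrm{Hilb}(\overline{\cZ}_T/T)$, which is quasi-projective over $T$ once we restrict to components of bounded Hilbert polynomial. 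Functoriality and flat base change for Hilbert schemes ensure that the construction is compatible with varying $T$, yielding the asserted representable quasi-projective morphism $\fX\to\fY$.

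The main obstacle is the two openness assertions and the handling of the infinitely many Hilbert components. The first is standard but requires care with the ``isomorphism is open'' criterion for proper flat families. The second is addressed by noting that for sections identifying $\Gamma_s\simeq\cC_T$, the Hilbert polynomial of $\Gamma_s$ with respect to a fixed polarization of $\overline{\cZ}_T$ is bounded on each connected component of $T$, so only finitely many components of the Hilbert scheme contribute locally on $T$, recovering quasi-projectivity.
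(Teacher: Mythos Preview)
The paper does not give a proof of this proposition; it simply cites \cite[Prop.\,2.3]{CL}. Your argument via the Section functor embedded as an open subfunctor of a relative Hilbert scheme is the standard one (going back to Grothendieck's FGA) and is the approach taken in \cite{CL} as well, so substantively you are on the same track.

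One refinement: the boundedness claim in your final paragraph is not correct as stated. Even over a connected $T$, distinct sections $s:\cC_T\to\cZ_T$ can have different Hilbert polynomials in $\overline{\cZ}_T$; for instance take $T$ a point, $\cC_T=\PP^1$, $\cZ_T=\PP^1\times\PP^1$ with the first projection, so that sections are graphs of morphisms $\PP^1\to\PP^1$ of arbitrary degree. What your argument actually establishes is that $\fX\times_\fY T$ is an open subscheme of $\mathrm{Hilb}(\overline{\cZ}_T/T)$, hence a $T$-scheme locally of finite type whose connected components are each quasi-projective over $T$. That is the content intended by the statement (and by \cite{CL}), and it suffices for every application in the paper, where one immediately passes to bounded open substacks via degree and stability conditions. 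Also, your remark about gluing Zariski-local compactifications of $\cZ_T$ over $\cC_T$ is unnecessary: since $\cZ_T\to\cC_T$ is quasi-projective by hypothesis, a global projective $\overline{\cZ}_T\to\cC_T$ containing $\cZ_T$ as an open subscheme exists by definition.
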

 
In case $\cZ=\text{Vb}(\sE)$  is the underling vector bundle of some locally free sheaf $\sE$ over $\cD$, 
the space $C(\pi_*\sE)$ is the moduli of sections, denoted as $C(\pi_*\sE)\cong \fX{}$ in \cite[Coro.\, 2.4]{CL}
 
 Let $\pi_\fX:\cC_\fX \to \fX$
 be the universal family of $\fX{}$ and let $\ee:\cC_\fX\to \cZ$ be the tautological
evaluation map. 
  We generalize \cite[Prop.\, 2.5]{CL}  to the following.
\begin{prop}\label{deformation} Suppose $\cZ\to\cC$ is a flat morphism. Then $\fX{}\to\fY$ has a relative obstruction theory
\begin{align*}\phi_{\fX{}/\fY}: \TT_{\fX{}/\fY}\lra \EE_{\fX{}/\fY}\defeq R^\bullet \pi_{\fX\ast} \ee\sta \TT_{\cZ/\cC}.
\end{align*}
\end{prop}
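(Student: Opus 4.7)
The plan is to mimic the argument in \cite[Prop.\, 2.5]{CL}, which handles the vector bundle case $\cZ=\Vb(\sE)$, and upgrade it to an arbitrary flat $\cZ/\cC$. The key point is that Illusie's machine for the cotangent complex only uses flatness of $\cZ\to\cC$, not the linear structure.

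First I would set up the universal diagram: with $\cC_\fX = \cC\times_\fY\fX$, the universal evaluation $\ee\colon\cC_\fX\to\cZ$ is a $\cC$-morphism, so the relative cotangent triangle reads
\begin{equation*}
\ee\sta\LL_{\cZ/\cC}\lra \LL_{\cC_\fX/\cC}\lra \LL_{\cC_\fX/\cZ}\xrightarrow{+1}.
\end{equation*}
Since $\cC_\fX\to\cC$ is the base change of $\fX\to\fY$ along the flat map $\cC\to\fY$, one has $\LL_{\cC_\fX/\cC}\can \pi_\fX\sta\LL_{\fX/\fY}$. Combining this with the first arrow of the triangle gives a morphism $\ee\sta\LL_{\cZ/\cC}\to \pi_\fX\sta\LL_{\fX/\fY}$. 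Because $\cZ\to\cC$ is flat of finite type, $\LL_{\cZ/\cC}$ is perfect in $[-1,0]$, so we may dualize to obtain $\pi_\fX\sta\TT_{\fX/\fY}\to \ee\sta\TT_{\cZ/\cC}$. Applying $R\pi_{\fX\ast}$ and precomposing with the unit $\TT_{\fX/\fY}\to R\pi_{\fX\ast}\pi_\fX\sta\TT_{\fX/\fY}$ of the adjunction $(\pi_\fX\sta,R\pi_{\fX\ast})$ defines the candidate morphism $\phi_{\fX/\fY}\colon \TT_{\fX/\fY}\to\EE_{\fX/\fY}$.

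Next I would verify the Behrend--Fantechi obstruction theory axioms. For any square-zero extension $T\hookrightarrow\bar T$ of affine schemes with ideal $J$ and any lift $\bar T\to\fY$ of $T\to\fY$, a $T$-point of $\fX$ is a section $s\colon\cC_T\to\cZ_T$, and by flatness of $\cZ\to\cC$ the classical theory (Illusie) shows that the set of lifts to $\bar T$ is a torsor under $H^0(\cC_T,s\sta\TT_{\cZ/\cC}\otimes J)$ when nonempty, with obstruction in $H^1(\cC_T,s\sta\TT_{\cZ/\cC}\otimes J)$. These are precisely the hypercohomology groups computed by $\EE_{\fX/\fY}$, and the identification with the intrinsic obstruction coming from $\LL_{\fX/\fY}$ via $\phi_{\fX/\fY}$ follows from the standard compatibility of the cotangent triangle with deformation theory, exactly as in the $\Vb(\sE)$-case of \cite[Prop.\, 2.5]{CL}.

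The main obstacle I anticipate is the bookkeeping around base change: since $\fY$ is merely Artin, one must check that $\LL_{\cC_\fX/\cC}\to\pi_\fX\sta\LL_{\fX/\fY}$ is indeed an isomorphism (not just a map) in the derived category, and that the adjunction manipulations used to produce $\phi_{\fX/\fY}$ are compatible with further base change $T\to\fX$ so that the obstruction-theoretic axioms can be tested pointwise. Both points rely on $\cC\to\fY$ being flat and proper of relative dimension one (a family of nodal curves), so derived pullback agrees with ordinary pullback on $\LL_{\fX/\fY}$ and Grothendieck--Serre duality behaves transparently. Once these compatibilities are in place the rest of the verification is a direct transcription of the argument in \cite{CL}, with $s\sta\TT_{\cZ/\cC}$ playing the role of $s\sta\sE$.
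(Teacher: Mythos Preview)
Your proposal is essentially the paper's own proof made explicit: the paper merely says that the argument of \cite[Prop.\,2.5, Appendix A.3]{CL} for smooth $\cZ/\cC$ carries over verbatim to flat $\cZ/\cC$, singling out that the needed base-change identity for the cotangent complex (the line below \cite[(A.9)]{CL}) holds by Illusie's flat base change \cite[Coro.\,2.2.3]{Illusie}, and you have reconstructed exactly that argument.

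One correction worth flagging: your assertion that ``$\cZ\to\cC$ flat of finite type'' implies $\LL_{\cZ/\cC}$ is perfect of amplitude $[-1,0]$ is false in general---that conclusion requires $\cZ/\cC$ to be a local complete intersection, not merely flat. This does not damage the construction of $\phi_{\fX/\fY}$ (derived dualization $\RHom(-,\sO)$ is functorial regardless, and the obstruction-theory axioms you verify via Illusie only use flatness), but you should drop the perfectness claim here; the paper likewise only asserts an \emph{obstruction theory} in this proposition and checks perfectness separately in each application, where $\cZ/\cC$ is in fact smooth or an lci cone.
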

\begin{proof}
 In the case $\cZ/\cC$ is smooth, the claim is proved in \cite[Prop.\, 2.5, Appendix A.3]{CL}.
All the arguments applies to the case $\cZ/\cC$ is flat. For example, the equality in the second lines under \cite[(A.9)]{CL} holds by the flat base change property of cotangent complex (\cite[Coro.\, 2.2.3]{Illusie}).

\end{proof}
\subsubsection{Perfect obstruction theory of the quasimap moduli}
\begin{defi}\label{cD}
Let $\cD:=\mathcal{D}_{g,\ell}$ be the groupoid associating to each scheme S the set $\cD(S)=\{(\mathcal{C}_S,\{p_i:S\to\cC_S\}_{i=1}^\ell,\cL_S)\}$, where $\pi:\mathcal{C}_S \rightarrow S$ is a flat family of connected nodal curves
and $\cL_S$ is a line bundle on $\mathcal{C}_S$.  An arrow from
$(\mathcal{C}_S,\{p_i:S\to\cC_S\}_{i=1}^\ell,\sL_S)$ to $ (\mathcal{C}^{\prime}_S,\{p^{\prime}_i:S\to\cC_S^{\prime}\}_{i=1}^\ell,\cL'_S)$ consists of $f:\mathcal{C}_S\rightarrow\mathcal{C}^{\prime}_S$ and an isomorphism $\tau:\cL_S\rightarrow f^{*}\cL_S^{\prime}$, which preserve the markings.
\end{defi}


 Let $\cC_{\cD}\mapright{\pi_\cD}\cD$ and $\sL_\cD$ be the universal curve and line bundle of $\cD$, and set $\cZ:=\mbox{Vb}( \mathscr{L}_{\cD}^{\oplus (n+1)})$ over $\cC_{\cD}$. There is an arrow
 $$\lam:\qme(\PP^n,d) \lra \fX\cong C(\pi_{\cD\ast} \cL_\cD^{\oplus n+1}),$$
 sending the data in Definition \ref{family} to the associated $S\to \cD$ with sections given by $\{u_j\}'s$. 
 Since the nondegeneracy condition in Definition \ref{stable qmap} and stability conditions in Definition \ref{stability} are open conditions, $\lam$ is an open embedding. 

 By Proposition \ref{ZA} and \ref{deformation}.
 $\qme(\PP^n,d)/\cD$ has an obstruction theory
$$\phi_{\qme(\PP^n,d)/\cD}:\TT_{\qme(\PP^n,d)/\cD}\to \EE_{\qme(\PP^n,d)/\cD}:=R^\bullet \pi_{\fX\ast} \ee\sta \TT_{\cZ/\cC_{\cD}}.$$
  Since $\cZ$ is smooth over $\cC_{\cD}$,  we know $R^\bullet \pi_{\fX\ast} \ee\sta \TT_{\cZ/\cC_{\cD}}$ has amplitude contained in $[0,1]$ and   $\phi_{\qme(\PP^n,d)/\cD}$   is a relative perfect obstruction theory.

Let $\fP$ be the universal $\GG_m$ principal bundle over $\cC_{\cD}$, then $\cZ\cong\fP\times_{\GG_m}\CC^{n+1}$.  We have the following canonical map
$$
 \Psi:\cZ=\mbox{Vb}( \mathscr{L}_{\cD}^{\oplus (n+1)})\lra [\CC^{n+1}/\GG_m],
$$

  Let $C_X$ be the cone $(q(x_i)=0)\subset\CC^{ {n+1}}$. Denote
\beq\label{cZX}
\cZ_X:=\fP\times_{\GG_m}C_{X}\cong\mbox{Vb}( \mathscr{L}_{\cD}^{\oplus (n+1)})\times_{[\CC^{n+1}/\GG_m]}[C_X/\GG_m]\sub \mbox{Vb}( \mathscr{L}_{\cD}^{\oplus (n+1)}).\eeq

Let $\fX{}_X$ be the stack of sections constructed in Proposition \ref{ZA} with $\cZ$ replaced by $\cZ_X$.
There by definition we have a Cartesian diagram
\beq
\begin{CD}
\qme(X,d)@>\sub >>  \qme(\PP^n,d)\\
@VV{\lam_X}V@VV{\lam}V\\
 \fX{}_X @>>>\fX,
\end{CD}
\eeq
Thus $\lam_X$ is also an open embedding.  Then Proposition \ref{deformation} induces
$$\phi_{\qme(X,d)/\cD}:\TT_{\qme(X,d)/\cD}\to \EE_{\qme(X,d)/\cD}:=R^\bullet \pi_{\fX_{X}\ast} \ee\sta \TT_{\cZ_{X}/\cC_{\cD}}.$$
Since  $\EE_{\qme(X,d)/\cD}$ above  is a two-term complex concentrated in $[0,1]$ by Theorem 4.52 in \cite{FKM},
$\phi_{\qme(X,d)/\cD}$ is a relative perfect obstruction theory. It is identical to the perfect obstruction theory of $\qme(X,d)/\cD$ defined by I. Ciocan-Fontanine, B. Kim and D. Maulik
 in \cite[Sect. 4.5]{FKM}.

\section{  $\epsilon$-stable quasimaps with  $P$-fields}
 We enlarge the moduli of $\epsilon$-stable quasimaps to $\PP^n$  by adding $P$-fields, and define its localized virtual cycle. This section is parallel to \cite[Sect.\,  3]{CL}.
 \subsection{Moduli of $\epsilon$-stable quasimaps with $P$-fields}
 Let $M:=\qme(\PP^n,d)$ be the moduli of genus $g$ degree $d$ $\epsilon$-stable quasimaps to $\PP^n$. Let $(\mathcal{C}_M, \pi_M)$  be the universal family of $\qme(\PP^n,d)$, and $\mathscr{L}_{M}$ be the university line bundle. Denote
$$
\sR_M:=\mathscr{L}_{M}^{\vee\otimes m}\otimes\omega_{\mathcal{C}_M/M}.
$$

 Using direct image cone construction, we form $$  \cP=\qme(\PP^n,d)^p:=C(\pi_{M*}\mathscr{P}_{M}) .$$
 It is represented by a DM stack.  We call it the moduli of genus $g$ degree $d$ $\epsilon$-stable quasimaps with $P$-fields. \black

Let $\pi_{\cD}:\cC_{\cD}\to \cD$ be the universal family. Let  $\sL_{\cD}$ be the universal line bundle on $\cC_{\cD}$. Then we have an  invertible sheaf and a vector bundle
$$\sR_{\cD}:=\sL^{\vee\otimes m}_{\cD}\otimes\omega_{\cC_{\cD}/\cD}, \qquad   \fZ\defeq \Vb(\sL^{\oplus (n+1)}_{\cD}\oplus \sR_{\cD})   $$
over $\cC_{\cD}$. If $\fX{}$ be the moduli stack  of sections of $\fZ/\cC_{\cD}$ in Proposition \ref{ZA}, $\cP$ is an open substack of $\fX$. The tautological evaluation morphism $\cC_{\fX{}}\to \fZ$ induces an
evaluation morphism $\ti\ee: \cC_\cpgp\lra \fZ$.
 Let $\pi_{\cpgp}:\cC_{\cP}\to\cP$ and $\mathscr{L}_{\cP}$  be the universal curve and line bundle over $\cP$ defined by the pull-back of $\cC_{\cD}$ and $\sL_{\cD}$. Denote $\sR_{\cP}:=\mathscr{L}_{\cP}^{\vee\otimes m}\otimes\omega_{\mathcal{C}_{\cP}/\cP}$.

\begin{prop}
The pair
$\cpdp\to \cD$ admits a perfect relative obstruction theory
$$\phi_{\cpdp/\cD}:\TT_{\cpdp/\cD}\lra \EE_{\cpdp/\cD}\defeq
R^\bullet\pi_{\cpgp \ast} (\sL^{\oplus (n+1)}_{\cP}\oplus\sR_{\cpgp}).
$$
\end{prop}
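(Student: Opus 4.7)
\medskip

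\noindent\textbf{Proof proposal.} The plan is to realize $\cP$ as an open substack of the section stack $\fX$ associated to the vector bundle $\fZ = \Vb(\sL^{\oplus(n+1)}_{\cD}\oplus \sR_{\cD})$ over $\cC_{\cD}$, and then deduce the obstruction theory of $\cP/\cD$ from that of $\fX/\cD$ by restriction. Concretely, a point of $\fX(S)$ over $S\to\cD$ is a $\cC_S$-morphism $\cC_S\to \fZ_S$, which is exactly the data of a tuple $(u_1,\dots,u_{n+1},p)$ with $u_j\in\Gamma(\sL_S)$ and $p\in\Gamma(\sR_S)$. The nondegeneracy and $\epsilon$-stability conditions imposed on the $u_j$'s cut out an open substack of $\fX$, which is $\cP$ by construction. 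Since open immersions preserve obstruction theories, it suffices to produce the desired obstruction theory on $\fX/\cD$.

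Next, $\fZ/\cC_\cD$ is the total space of a locally free sheaf, hence in particular smooth and flat, so Proposition \ref{deformation} supplies a perfect relative obstruction theory
$$\phi_{\fX/\cD}:\TT_{\fX/\cD}\lra R^\bullet \pi_{\fX\ast}\,\tilde\ee\sta\TT_{\fZ/\cC_{\cD}}.$$
For a vector bundle $\fZ = \Vb(\sE)$ over a base $\cC$ with projection $p\colon\fZ\to \cC$, one has the canonical identification $\TT_{\fZ/\cC}\cong p\sta \sE$. Applying this with $\sE = \sL^{\oplus(n+1)}_{\cD}\oplus\sR_{\cD}$ and composing the universal evaluation $\tilde\ee\colon \cC_\fX \to \fZ$ with $p$ (which is just the structural map $\cC_\fX\to \cC_\cD$), one obtains
$$\tilde\ee\sta\TT_{\fZ/\cC_{\cD}}\;\cong\; \sL^{\oplus(n+1)}_{\fX}\oplus \sR_{\fX},$$
where $\sL_\fX,\sR_\fX$ denote the pullbacks to $\cC_\fX$ of $\sL_\cD$ and $\sR_\cD$. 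Restricting to the open substack $\cP\subset\fX$ and using the compatibility of $R\pi_\ast$ with base change along the open immersion, the target of $\phi_{\fX/\cD}|_\cP$ becomes exactly $R^\bullet\pi_{\cP\ast}(\sL^{\oplus(n+1)}_{\cP}\oplus\sR_{\cP})$, yielding the claimed $\phi_{\cpdp/\cD}$.

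Finally, for perfectness we observe that $\pi_\cP\colon \cC_\cP \to \cP$ is a flat proper family of nodal curves, so for any locally free sheaf $\sF$ on $\cC_\cP$ the complex $R^\bullet\pi_{\cP\ast}\sF$ has tor-amplitude in $[0,1]$ and is globally represented, \'etale locally on $\cP$, by a two-term complex of locally free sheaves. Applied to $\sF = \sL^{\oplus(n+1)}_\cP\oplus \sR_\cP$, this gives perfectness of the obstruction theory. The only nontrivial point in this argument is the identification in the previous paragraph of $\tilde\ee\sta\TT_{\fZ/\cC_\cD}$ with the pullback of $\sL^{\oplus(n+1)}_\cD\oplus \sR_\cD$; this is the place where flat base change of the cotangent complex is used (exactly as in the proof of Proposition \ref{deformation}), and is the main point to verify carefully, though for a vector bundle target it reduces to a standard computation.
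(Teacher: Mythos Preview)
Your proposal is correct and follows essentially the same approach as the paper: realize $\cP$ as an open substack of the section stack $\fX$ for $\fZ=\Vb(\sL_\cD^{\oplus(n+1)}\oplus\sR_\cD)$, apply Proposition~\ref{deformation}, and use the identification $\TT_{\fZ/\cC_\cD}\cong\varpi^\ast(\sL_\cD^{\oplus(n+1)}\oplus\sR_\cD)$ for the projection $\varpi\colon\fZ\to\cC_\cD$. The paper's proof is a one-line invocation of exactly these ingredients; your version simply spells out the perfectness argument and the open-restriction step in more detail.
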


\begin{proof} It follows from Proposition \ref{deformation} by applying it to the morphism $\ti\ee$,
using that $\TT_{\fZ/\cC_{\cD}}=\varpi^*\sL^{\oplus (n+1)}_{\cD}\oplus \varpi^*\sR_{\cD}$, where $\varpi:\fZ\to\cC_{\cD}$ denotes the projection.
\end{proof}

\subsection{Construction of the cosection}

  Define a  bundle morphism
\begin{align}\label{co}
\Lambda: \Vb(\sL^{\oplus (n+1)}_{\cD} \oplus\sR_{\cD})\lra \Vb(\omega_{\cC_{\cD}/\cD}),
\quad \Lambda(x,p)=p\cdot q(x),
\end{align}

 for $(x,p)=\bl (x_i),p)\in \Vb(\sL^{\oplus (n+1)}_{\cD} \oplus \sR_{\cD})$.
The product $p\cdot q(x)$  is given by   $\sL^{\otimes m}_{\cD}\otimes\sR_{\cD}\to \omega_{\cC_{\cD}/\cD}$. The $\Lambda$ induces a morphism   
$$
d\Lambda:\TT_{\Vb(\sL^{\oplus (n+1)}_{\cD} \oplus\sR_{\cD})/\cC_{\cD}}\to \Lambda^*\TT_{\Vb(\omega_{\cC_{\cD}/\cD})/\cC_{\cD}}.
$$
 Pull back to $\cC_{\cP}$ via evaluation map $\ti\ee$ and apply $R^{\bullet}\pi_{\cP}$. We have
\beq\label{use}
\begin{CD}
\sigma_1^\bullet:\EE_{{\cpdp}/\cD}\lra R^\bullet\pi_{{\cpdp}\ast}(\ti\ee^\ast
\Lambda^\ast \Omega\dual_{\Vb(\omega_{\cC_{\cD}/\cD})/\cC_{\cD}})\cong R^\bullet\pi_{{\cpdp}\ast}(\omega_{\cC_{\cpdp}/{\cpdp}}).
\end{CD}
\eeq
 It induces the following morphism:
$$\sigma_1:  \Ob_{\cP/\cD}=R^1\pi_{\cpdp \ast}\sL^{\oplus (n+1)}_{\cP} \oplus R^1\pi_{\cpgp\ast} \sR_{\cP}\lra \sO_{\cpdp}.
$$

 A coordinate expression of $\sigma_1$ is as follows. Let
$\mathfrak{u}_{1},\cdots,\mathfrak{u}_{n+1}\in\Gamma (\cC_{\cpgp},\sL_{\cpgp})$ and $\mathfrak{p}\in\Gamma (\cC_{\cpgp},\sR_{\cpgp})$
be the tautological section of $\cpgp$. For any   chart $T\to\cpgp$, let $\cC_T=\cC_{\cpgp}\times_{\cpgp} T$. And let $p$ and $u_{i}$   be the pull-back of $\mathfrak{p}$ and $\mathfrak{u}_{i}$ to $\cC_T$, and  $u:=(u_1,\cdots,u_{n+1})$. Then for arbitrary
$$ \mathring p\in H^1(\cC_T,\sR_{\cpgp})\and
\mathring {u}=(\mathring{u}_{i}) \in H^1(\cC_T, \sL^{\oplus (n+1)}_{ \cpdp}),
$$
\begin{align*}
\sigma_1(\mathring{p},\mathring{u})=p\sum_{i}\frac{\partial q(u)}{\partial u_i}\mathring{u}_{i}+\mathring{p}q(u).
\end{align*}
\subsection{Degeneracy locus of the cosection}  We define the degeneracy loci of $\sigma_1$
$$Deg(\sigma_1):=\Bigl\{\xi\in\cpdp\,\big|\, \sigma_1|_\xi: \Ob_{{\cpgp}/\cD}\otimes_{\sO_{\cpdp}}\CC(\xi)\lra \CC(\xi) \ \text{vanishes}\,\Bigr\}.
$$
 As the definition in section 2, we can embed
$$\qme(X,d)\sub\qme(\PP^n,d)\sub\cpgp,
$$
where the second inclusion is by assigning zero $P$-fields.
\begin{prop}
The degeneracy loci of $\sigma_1$ is $\qme(X,d)\sub \cpdp$; it is proper.
\end{prop}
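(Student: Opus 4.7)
The plan is to pass to fibers and use Serre duality to convert the vanishing of the cosection to a vanishing condition on two sections, then exploit the smoothness of $X$ to force the $P$-field to vanish.

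Fix a closed point $\xi=(C,p_1,\dots,p_\ell,L,u,p)\in\cpdp$. The fiber of the cosection at $\xi$ is the map
\[
\sigma_1|_\xi: H^1(C,L^{\oplus(n+1)})\oplus H^1(C,L^{\vee\otimes m}\otimes\omega_C)\lra H^1(C,\omega_C)\cong\CC
\]
given by the formula recorded above. By Serre duality, the pairing $H^0(C,L^\vee\otimes\omega_C)\times H^1(C,L)\to\CC$ and the pairing $H^0(C,L^{\otimes m})\times H^1(C,L^{\vee\otimes m}\otimes\omega_C)\to\CC$ are perfect. Thus $\sigma_1|_\xi=0$ if and only if the two sections
\[
p\cdot\tfrac{\partial q}{\partial u_i}(u)\in H^0(C,L^\vee\otimes\omega_C)\quad (i=1,\dots,n{+}1)\and q(u)\in H^0(C,L^{\otimes m})
\]
all vanish. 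The vanishing $q(u)=0$ is precisely the condition that $\xi$ defines a quasimap to $X$ (Definition \ref{family1}).

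Next I would show that, granted $q(u)=0$, the simultaneous vanishing $p\cdot\tfrac{\partial q}{\partial u_i}(u)=0$ forces $p=0$. Let $B\subset C$ be the (finite) base locus of $u$, disjoint from nodes and markings. Away from $B$, the sections $u_1,\dots,u_{n+1}$ generate $L$, so the image point of $u$ in $\PP^n$ lies on $X$; by the Jacobian criterion applied to the smooth hypersurface $X$, at every such geometric point at least one partial $\tfrac{\partial q}{\partial x_i}$ is nonzero at that point. Hence the sections $\tfrac{\partial q}{\partial u_i}(u)\in H^0(C,L^{\otimes m-1})$ have no common zero outside $B$. Since $p$ is a section of an invertible sheaf on $C$, the product vanishing $p\cdot\tfrac{\partial q}{\partial u_i}(u)=0$ on each irreducible component forces $p$ to vanish on the complement of the finite set $B$, and hence $p\equiv 0$. (A small case analysis handles irreducible components entirely contained in $B$, where $u\equiv 0$ is excluded by the nondegeneracy condition of Definition \ref{stable qmap}.) This identifies $\mathrm{Deg}(\sigma_1)$ set-theoretically with the image of $\qme(X,d)\hookrightarrow\cpdp$ under the zero-$P$-field embedding.

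For properness, it suffices to note that $\qme(X,d)$ is a closed substack of the proper DM stack $\qme(\PP^n,d)$ of Lemma \ref{equi} (closedness comes from the fact that $q(u)=0$ is a closed condition in families, cf.\ Definition \ref{family1}), hence is itself proper over $\CC$. The main subtlety I expect is the last step of the fiberwise argument: namely, ruling out nontrivial $p$ supported on components along which $u$ is identically zero or on which $u$ lands in the singular locus of $X$; the former is excluded by the nondegeneracy condition, and the latter is vacuous by the smoothness of $X$.
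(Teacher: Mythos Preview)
Your proposal is correct and follows essentially the same route as the paper's proof: pass to a closed point, use Serre duality to convert vanishing of $\sigma_1|_\xi$ into the vanishing of $q(u)$ and of each $p\cdot\partial_i q(u)$, then invoke smoothness of $X$ (Jacobian criterion) on $C\setminus B$ to force $p=0$. The only superfluous bit is your worry about ``irreducible components entirely contained in $B$'': since $B$ is finite by the nondegeneracy condition, no such component can exist, so that case analysis is vacuous.
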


\begin{proof}
At each $\eta=(C,p_1,\cdots,p_\ell ,L,u,p)\in \cpdp$, where $u=(u_{i})\in H^0(C,L^{\oplus n+1})$, we have $\sigma_1|_{\eta}(\mathring{p},\mathring u)=p\sum_{i}\frac{\partial q(u)}{\partial u_i}\mathring{u}_{i}+\mathring{p}q(u)$. If $\eta\in Deg(\sigma_1)$,
  $\mathring{p}q(u)=0$ for arbitrary $\mathring{p}$ implies $q(u)=0$ by Serre duality.
  Similarly  $p\sum_{i}\frac{\partial q(u)}{\partial u_i}\mathring{u}_{i}=0$ for arbitrary $\{\mathring{u}_i\}$ implies $p\frac{\partial q(u)}{\partial u_i}=0$ for each $i$.  By definition the set of common zeros of  $(u_i)$ is a
   finite set $B\sub C$.  As $C_X-\{0\}$ is smooth, $(\frac{\partial q(u)}{\partial u_1},\cdots,\frac{\partial q(u)}{\partial u_{n+1}})$ is nowhere zero on $C-B$. Thus $p|_{C-B}=0$ and  $p=0$. Therefore  $Deg(\sigma_1)$ is the set of $(C,p_1,\cdots,p_{\ell},L,u,p)$ with $q(u)=0$ and $p=0$. This set is $\qme(X,d)\sub\cpdp$.
\end{proof}

 Let $\fq:\cpdp\to \cD$ be the tautological morphism. We form the  triangle
\beq\label{dt1}
\fq\sta \LL_{\cD}\lra \LL_{\cpdp}\lra \LL_{\cpdp/\cD}\mapright{\zeta} \fq\sta \LL_{\cD}[1].
\eeq
Composed with $\phi_{\cpdp/\cD}:\TT_{\cpdp/\cD}\to \EE_{\cpdp/\cD}$, let
\beq\label{tang}
\eta:=H^0(\phi_{\cpdp/\cD}\circ \zeta\dual):  \fq\sta H^0(\TT_{\cD})\lra
H^1(\TT_{\cpdp/\cD})\lra H^1(\EE_{\cpdp/\cD})=\Ob_{\cpdp/\cD}.
\eeq
  Set  the absolute obstruction sheaf $\Ob_{\cpdp}:=\text{cokernel}\, \eta$. The lemma below follows from exactly same argument as the proof of \cite[Lemm.\, 3.6]{CL}.
  \begin{lemm}\label{cone}
The following composition is trivial:
$$
0=H^1(\sigma_1^\bullet\circ\phi_{\cpdp/\cD}): H^1(\TT_{\cpdp/\cD}) \mapright{}H^1(\EE_{\cpdp/\cD})
\mapright{}
R^1\pi_{\cpdp\ast}\omega_{\cC_{\cpdp}/\cpdp}.
$$
\end{lemm}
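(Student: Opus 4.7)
The plan is to follow [CL, Lemm.~3.6] line by line, exploiting the fact that both $\phi_{\cpdp/\cD}$ and $\sigma_1^\bullet$ are built functorially from the evaluation $\ti\ee\colon \cC_\cpdp \to \fZ$ via $R^\bullet\pi_{\cpdp\ast}$, so their composition can be computed by an intrinsic calculation on $\cC_\cpdp$.

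First, I would reformulate the composition. By Proposition~\ref{deformation}, $\phi_{\cpdp/\cD}$ is obtained by applying $R^\bullet\pi_{\cpdp\ast}$ to a morphism $\TT_{\cpdp/\cD}\to \ti\ee\sta\TT_{\fZ/\cC_\cD}$ induced by $\ti\ee$, while by \eqref{use} the cosection $\sigma_1^\bullet$ is defined by further composing with $R^\bullet\pi_{\cpdp\ast}$ applied to the pulled-back differential $\ti\ee\sta d\Lambda\colon \ti\ee\sta\TT_{\fZ/\cC_\cD}\to \ti\ee\sta\Lambda\sta\TT_{\Vb(\omega_{\cC_\cD/\cD})/\cC_\cD}\cong \omega_{\cC_\cpdp/\cpdp}$. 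Functoriality of $R^\bullet\pi_{\cpdp\ast}$ identifies $\sigma_1^\bullet\circ\phi_{\cpdp/\cD}$ with the morphism induced by the composite $\Lambda\circ\ti\ee\colon\cC_\cpdp\to\Vb(\omega_{\cC_\cD/\cD})$, which by construction corresponds to the global section $s:=\mathfrak{p}\cdot q(\mathfrak{u})\in H^0(\cC_\cpdp,\omega_{\cC_\cpdp/\cpdp})$.

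Next, I would execute the computation on $H^1$. A class $\mathring\xi\in H^1(\TT_{\cpdp/\cD})$ corresponds to a first-order obstruction to smoothness of $\cpdp/\cD$, representable on an affine cover $\{V_i\}$ of a thickening of $\cC_\cpdp$ by local extensions $(\mathfrak u_i,\mathfrak p_i)$ of the universal sections modulo $\epsilon^2$. Its image under $\phi_{\cpdp/\cD}$ is the \v{C}ech $1$-cocycle $\{((\mathfrak u_j-\mathfrak u_i)/\epsilon,(\mathfrak p_j-\mathfrak p_i)/\epsilon)\}_{ij}$ in $H^1(\cC_\cpdp,\sL_\cpdp^{\oplus(n+1)}\oplus\sR_\cpdp)$. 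Applying the explicit formula for $\sigma_1$ pointwise and using the Taylor expansion of $q$ at $\mathfrak u$, one finds
$$\Bigl\{\,\mathfrak{p}\sum_{k}\tfrac{\partial q(\mathfrak u)}{\partial \mathfrak u_k}\tfrac{\mathfrak u_{j,k}-\mathfrak u_{i,k}}{\epsilon} + q(\mathfrak u)\,\tfrac{\mathfrak p_j-\mathfrak p_i}{\epsilon}\Bigr\}_{ij} = \Bigl\{\tfrac{\mathfrak p_j\,q(\mathfrak u_j)-\mathfrak p_i\,q(\mathfrak u_i)}{\epsilon}\Bigr\}_{ij}$$
in $\omega_{\cC_\cpdp/\cpdp}$. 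The right side is precisely the \v{C}ech coboundary of the $0$-cochain $\{\mathfrak p_i\,q(\mathfrak u_i)/\epsilon\}$, i.e.\ of the canonical extension of $s=\mathfrak p\cdot q(\mathfrak u)$ across the first-order thickening obtained by plugging the local extensions of $(\mathfrak u,\mathfrak p)$ into the universal formula. Hence its class in $R^1\pi_{\cpdp\ast}\omega_{\cC_\cpdp/\cpdp}$ vanishes.

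The main obstacle is justifying, at the cocycle level, that $\phi_{\cpdp/\cD}(\mathring\xi)$ really is represented by the difference cocycle of any local extensions of the universal sections. This identification is carried out carefully in [CL, App.~A.3] for the closely parallel case of stable maps with $P$-fields, and uses only flatness of $\fZ\to\cC_\cD$ (automatic here, as $\fZ$ is a vector bundle) together with the direct image cone description of the relative obstruction theory. Both hypotheses hold in our setting, so the computation transfers verbatim; the only input that changes is the replacement of the quintic polynomial and target $K_{\PP^4}$ in [CL] by our arbitrary hypersurface polynomial $q$ and target $\fZ=\Vb(\sL_\cD^{\oplus(n+1)}\oplus\sR_\cD)$, neither of which affects the local analysis.
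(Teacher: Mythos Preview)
Your proposal is correct and matches the paper's approach: the paper gives no independent argument but states that the lemma ``follows from exactly same argument as the proof of \cite[Lemm.\,3.6]{CL},'' and your write-up is precisely a spelled-out version of that argument adapted to the present hypersurface setting. The explicit \v{C}ech-cocycle computation you give, showing that the image of an obstruction class is the coboundary of the $0$-cochain $\{\mathfrak p_i\,q(\mathfrak u_i)/\epsilon\}$, is the heart of the [CL] proof, and your reference to \cite[App.\,A.3]{CL} for the identification of $\phi_{\cpdp/\cD}(\mathring\xi)$ with the difference cocycle is appropriate.
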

\begin{coro}\label{lift}
The cosection $\sigma_1:\Ob_{\cpdp/\cD}\to \sO_\cpdp$ lifts to a  $\bar\sigma_1: \Ob_\cpdp\to \sO_\cpdp$.
\end{coro}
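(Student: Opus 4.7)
The plan is to show directly that the cosection $\sigma_1$ vanishes on the image of $\eta$, which by the universal property of cokernels forces $\sigma_1$ to factor through $\Ob_\cpdp = \operatorname{coker}\eta$.

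First I would unravel the construction of $\eta$: by definition it is $H^1$ of the composition
\[
\fq\sta\TT_{\cD} \xrightarrow{\zeta\dual} \TT_{\cpdp/\cD}[1] \xrightarrow{\phi_{\cpdp/\cD}[1]} \EE_{\cpdp/\cD}[1],
\]
applied on $H^0$ of the source (equivalently, to $H^1$ of the target shifted back). Concretely, $\eta$ factors as the connecting map $\fq\sta H^0(\TT_\cD)\to H^1(\TT_{\cpdp/\cD})$ coming from $\zeta\dual$, followed by $H^1(\phi_{\cpdp/\cD}):H^1(\TT_{\cpdp/\cD})\to H^1(\EE_{\cpdp/\cD})=\Ob_{\cpdp/\cD}$.

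Next I would post-compose with $\sigma_1=H^1(\sigma_1^\bullet):H^1(\EE_{\cpdp/\cD})\to R^1\pi_{\cpdp\ast}\omega_{\cC_\cpdp/\cpdp}\cong\sO_\cpdp$ (Serre duality along the family of curves). By functoriality of $H^1$,
\[
\sigma_1\circ\eta \;=\; H^1\!\bigl(\sigma_1^\bullet\circ\phi_{\cpdp/\cD}\bigr)\;\circ\;(\text{connecting map from }\zeta\dual).
\]
Lemma \ref{cone} states precisely that $H^1(\sigma_1^\bullet\circ\phi_{\cpdp/\cD})=0$, so the composition $\sigma_1\circ\eta$ is zero.

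Having shown $\sigma_1\circ\eta=0$, the cosection $\sigma_1:\Ob_{\cpdp/\cD}\to\sO_\cpdp$ kills the image of $\eta$, hence factors uniquely through $\Ob_{\cpdp}=\Ob_{\cpdp/\cD}/\operatorname{im}\eta$, yielding the desired lift $\bar\sigma_1:\Ob_\cpdp\to \sO_\cpdp$. No serious obstacle arises: the content is entirely packaged into Lemma \ref{cone}, and the corollary is a formal consequence via the universal property of the cokernel together with the functoriality of $H^1$ applied to the triangle \eqref{dt1}.
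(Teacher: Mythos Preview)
Your proof is correct and follows essentially the same approach as the paper: show $\sigma_1\circ\eta=0$ by identifying it with (a further composition of) $H^1(\sigma_1^\bullet\circ\phi_{\cpdp/\cD})$, which vanishes by Lemma~\ref{cone}, and then factor through the cokernel. The paper's version is only more terse, writing $\sigma_1\circ\eta$ directly as $H^1$ of the full composite $\fq^*\TT_{\cD}[-1]\to\TT_{\cpdp/\cD}\to\EE_{\cpdp/\cD}\to R^\bullet\pi_{\cpdp\ast}\omega_{\cC_\cpdp/\cpdp}$.
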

\begin{proof}
  The composition of $\sigma_1$ with \eqref{tang} is the $H^1$ of the composition
$$\fq^*\TT_{\cD}[-1]\mapright{\zeta\dual} \TT_{\cpdp/\cD}\mapright{\phi_{\cpdp/\cD}}\EE_{\cpdp/\cD}\mapright{\sigma_1^\bullet} R^\bullet\pi_{\cpdp\ast}\omega_{\cC_\cpdp/\cpdp},
$$
  which vanishes by Lemma \ref{cone}. 
\end{proof} \black

\subsection{The virtual dimension and  the virtual cycle}
  Pick arbitrary closed point $\xi=(C,\{p_j\}_{j=1}^\ell , L,\phi, p) $ of $\cpgp$. The virtual dimension   of $\cpgp/\cD$ is
 $$\rank (\EE_{\cpgp/\cD}\otimes_{\sO_{\cpgp}}\CC(\xi))=
\chi(C,L^{\oplus (n+1)})+\chi(C,L^{\vee \otimes m}\otimes \omega_C)=(n+1-m)d+n(1-g).
$$
 One then calculates
$$\dim {\cD} =\dim \cD/\barM_{g,\ell}  + \dim \barM_{g,\ell}=g-1+3g-3+\ell=4(g-1)+\ell,
$$
 $$\delta:=\text{vdim}\, \cpgp=\dim \cD +\text{vdim}\,\cpgp/\cD=(n+1-m)d+(n-4)(1-g)+\ell.$$ \black
 Let $\cU\sub \cpgp$ be the locus where $\sigma_1$ is surjective; and denote  $Deg(\sigma_1)=\cpgp-\cU
$.  Since $\sigma_1|_\cU$ is surjective,
it induces a surjective bundle homomorphism
\begin{align*} \sigma_1|_\cU: h^1/h^0(\EE_{\cpgp/\cD})\times_\cP\cU\lra \CC_\cU,
\end{align*}
where $\CC_\cU=\CC\times\cU$. Let $\ker(\sigma_1|_\cU)$ be the kernel bundle-stack of $\sigma_1|_\cU$.
 Endow
\beq\label{cone-stack}
h^1/h^0(\EE_{\cpgp/\cD})(\sigma_1)=\bl h^1/h^0(\EE_{\cpgp/\cD})\times_{\cpgp} Deg(\sigma_1)\br \cup
\ker(\sigma_1|_\cU).
\eeq
 with the reduced structure. It is a closed substack of $h^1/h^0(\EE_{\cpgp/\cD})$.

 By \cite[Thm.\, 5.1]{KL} the normal cone cycle $[\bC_{\cpgp/\cD}]\in Z\lsta h^1/h^0(\EE_{\cpgp/\cD})$
lies in $Z\lsta h^1/h^0(\EE_{\cpgp/\cD})(\sigma_1)$.  Kiem-Li's localized Gysin map  gives
$$0^!_{\bar\sigma_1,\mathrm{loc}}: A\lsta h^1/h^0(\EE_{\cpgp/\cD})(\sigma_1)\lra A_{\ast} Deg(\sigma_1).
$$
\begin{defi-prop}
We define the localized virtual cycle of $\cpgp$ to be
$$[\cpgp]\virt=[\qme(\PP^n,d)^p]\virt:=0^!_{\bar\sigma_1,\mathrm{loc}}([\bC_{\cpgp/\cD}])\in A_{}
\qme(X,d).
$$
\end{defi-prop}

\section{Degeneration of moduli of $\epsilon$-stable quasimaps with $P$-fields}

In the following sections, we  use  degenerations   in \cite{CL} to prove  $[\cpgp]\virt$ coincides up to a sign with $[\qme(X,d)]^{\text{vir}}$. 
  The setup is close to   \cite{CL} but
 the perfectness of family obstruction theories requires the quasimap conditions(see Proposition \ref{ob-WV}).
 We also give a new proof of the  constancy of virtual cycles in the degeneration (Theorem \ref{constancy}), which greatly simplifies that in \cite{CL1}.  

 Let $(\mathcal{C}_M, \pi_M)$ be the universal family of $\qme(\PP^n,d)$, and let $\mathscr{L}_{M}$ be the university line bundle as before. We form a separated DM stack 
$$
F=C(\pi_{M*}(\sL_{M}^{\otimes m}\oplus \sO_{\cC_{M}})).
$$

 We define $\cK_{g,\ell}^{\ep}(V,d)$ to be the subgroupoid of $F$, such
 that $\cK_{g,\ell}^{\ep}(V,d)(S)$ is the set of all families:
 \beq\label{obj}(\pi: \cC_S \rightarrow S, \{ p_i: S \rightarrow \cC_S\}_{i=1,\dots,\ell}, \sL_S, \{u_i\}_{i=1}^{n+1},t,y)\in F(S)\eeq
 subject to the equation
 \begin{align*}q(u_1,\cdots,u_{n+1})-ty=0\in \Gamma(\cC_S,\sL_S^{\otimes m}),\end{align*}
  where $t\in \Gamma(\cC_S,\sO_{\cC_S})$ and $y\in\Gamma(\cC_S,\sL_S^{\otimes m})$.
 Clearly $\cK_{g,\ell}^{\ep}(V,d)$ is a closed substack of $F$, and thus is a separated DM stack.

 Now we introduce the stable quasimaps with $p$-field. Let $(\widetilde{\cC},\ti \pi)$ be the universal family of $\cK_{g,\ell}^{\ep}(V,d)$, and let $\ti \sL$ be the universal line bundle over $\widetilde{\cC}$. Let
$\ti \sR=\ti \sL^{\vee\otimes m}\otimes \omega_{\widetilde{\cC}/\cK_{g,\ell}^{\ep}(V,d)}$ be the tautological   invertible sheaves. Define the moduli of stable morphisms coupled
with $P$-fields to be
$$\cvgp\defeq \cK_{g,\ell}^{\ep}(V,d)^p\defeq C(\ti \pi_{\ast}\ti \sR_{}),
$$
the direct image cone. Sending \eqref{obj} to $S\mapright{t} \Ao$ induces a map $\cvgp\to\Ao$, with fibers canonically described as
$$\cvgp_c:=\cvgp\times_{\Ao} c\cong \cpg,\quad \ \ \ c\ne 0; $$
$$\cvgp_0:=\cvgp\times_{\Ao} 0=:\cN \quad \ \ \ c=0.$$

 By construction $\cvgp_0$ represents the groupoid  where
 $\cvgp_0(S)$ is the set of all $(\cC_S, \sL_S, \cdots,p,y)$
 where $(\cC_S, \sL_S, \cdots)\in \qme(X,d)(S)$,  $p\in\Gamma(\cC_S,\sL_S^{\vee\otimes m}\otimes\omega_{\cC_S/S})$ \black and $y\in\Gamma(\cC_S,\sL_S^{\otimes m}).$

\subsection{The evaluation maps}
 We construct a natural  obstruction theory
of $\cV$ relative to $\DD:=\cD\times\Ao$. Denote  the universal
curve by
$$\pi_{\DD}:\cC_{\DD}\defeq \cC_{\cD}\times\Ao\lra {\cD}\times\Ao=\DD,
$$ and $\sL_{\DD}$  the pull-back of $\sL_{\cD}$ via $\cC_{\DD}\to\cC_{\cD}$. We have a bundle over $\cC_{\DD}$
\begin{align*}\Vb(\sL_{\DD}^{\oplus (n+1)})\times_{\cC_{\DD}} \Vb(\sL^{\otimes m}_{\DD})\lra \cC_{\DD}.
\end{align*}

Let $E_{2}=\CC\lAo$ (resp. $E_1=\CC\lAo^{\oplus (n+1)}$)
be the trivial line bundle (resp. rank $n+1$ trivial vector bundle) over $\Ao$.
We consider the rank $n+2$ bundle
$$\pr_{\Ao}: E_1\times\lAo E_2 \mapright{}\Ao
$$
with the $\Gm$-action: $\Gm$ acts on the base $\Ao$ trivially and acts on fibers of $E_1$ via $g\cdot(x_1,\cdots,x_{n+1})=(gx_1,\cdots,gx_{n+1})$, and acts on fibers of $E_2$ as  $ g\cdot( y_0)=(g^m y_0)$.\black

 Denote  $\Delta:=q(x_i)- t\cdot y_0
$ and let $C(V)=(\Delta=0)\sub  E_1\times_{\Ao} E_2$, where $t$ denotes the coordinate of $\Ao$.
There is a canonical morphism
$$
\Vb(\sL_{\DD}^{\oplus (n+1)})\times_{\cC_{\DD}} \Vb(\sL^{\otimes m}_{\DD})\lra
 [(E_1\times\lAo E_2)/\GG_m].
$$
We define
\begin{align*}
\cX'= \bigg( \Vb(\sL^{\oplus (n+1)}_{\DD})\times_{\cC_{\DD}} \Vb(\sL^{\otimes m}_{\DD})\bigg) \times_{[(E_1\times\lAo E_2)/\GG_m]} [C(V)/\GG_m],
\end{align*}
and
\beq\label{ZZ}
\cX=\cX'\times_{\cC_{\DD}} \Vb(\sR_{\DD}).
\eeq
Let $\pi_{\cV}:\cC_{\cV}\to \cV$ be the universal family.
The natural evaluation morphism
$$e:\mathcal{C}_{F}\lra \Vb(\sL^{\oplus (n+1)}_{\DD})\times_{\cC_{\DD}} \Vb(\sL^{\otimes m}_{\DD}),$$
(where $\cC_F$ is the universal family of $F$) restricts to give the evaluation morphism
$$
\ee_{\cV}: \cC_{\cvgp}\lra \cX.
$$

\subsection{The obstruction theory of $\cV/\DD$}

We begin with a description of the tangent complex $\TT_{\cX'/\cC_{\DD}}$. Let $\varrho: \cX'\to \cC_{\DD}$ be the
tautological projection.
By the defining equation of $C(V)$,
$$
\TT_{C(V)/\AA} \cong [\sO^{\oplus (n+1)}_{C(V)}\oplus \sO_{C(V)}\mapright{d\Delta} \sO_{C(V)}],
$$
where $d\Delta$ at $((x_i),y_0, t)\in C(V)$ sends $((\mathring x_i), \mathring y_0)$ to $\sum \frac{\partial q(x_i)}{\partial x_i} \mathring x_i- t\mathring y_0$.
Thus
$$\TT_{\cX'/\cC_{\DD}}\cong\TT_{\fP\times_{\GG_m} C(V)/\cC_{\DD}}\cong \bigg[\varrho\sta \sL^{\oplus (n+1)}_{\DD}\oplus \varrho\sta \sL^{\otimes m}_{\DD}\mapright{d\overline{\Delta}} \varrho\sta\sL^{\otimes m}_{\DD}\bigg] ,$$ \black
where $d\overline{\Delta}$ at $((z_i),y, t)\in \cX'$
sends $((\mathring z_i), \mathring y)$ to $\sum \frac{\partial
q(z_i)}{\partial z_i} \mathring z_i- t\mathring y$. Let
$\sL_{\cV}$ be the universal line bundle over $\cC_{\cV}$, then
\begin{align*} \ee^*_{\cV}\TT_{\cX/\cC_{\DD}}\cong
\bigg[\sL^{\oplus (n+1)}_{\cV}\oplus  \sL^{\otimes
m}_{\cV}\mapright{d\kappa} \sL^{\otimes m}_{\cV}\bigg]\oplus
\big[\sL^{\vee\otimes m}_{\cV}\otimes\omega_{\cC_{\cV}/\cV}\lra
0\big], \end{align*}
 where $d\kappa$ restricted to $((\phi_i),\ti
t,b,p)\in \cC_{\cV}$ sends $((\mathring \phi_i), \mathring b)$ to
$\sum \frac{\partial q(\phi_i)}{\partial \phi_i} \mathring
\phi_i- t\mathring b$. Denote
$$
\cK^{\bullet}_1=\bigg[\sL^{\oplus (n+1)}_{\cV}\oplus  \sL^{\otimes m}_{\cV}\mapright{d\kappa} \sL^{\otimes m}_{\cV}\bigg].
$$
\begin{lemm}\label{tor}
Let $C$ be a geometric fiber of $\cC_{\cV}\to \cV$ over arbitrary closed point $\xi\in\cV$, then $\cH^1(\ee^*_{\cV}\TT_{\cX/\cC_{\DD}}|_{C})$ is a torsion sheaf on $C$.
\end{lemm}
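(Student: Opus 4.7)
The plan is to read $\cH^1$ directly off the explicit two-term presentation of $\ee_\cV^*\TT_{\cX/\cC_\DD}$ given just above the statement, and then split into cases according to the value of $\ti t$ at the closed point $\xi$. First I would observe that the summand $[\sL^{\vee\otimes m}_\cV\otimes\omega_{\cC_\cV/\cV}\to 0]$ is concentrated in degree $0$ and therefore contributes nothing to $\cH^1$. Since $\cK^\bullet_1$ is a two-term complex of locally free sheaves, its naive restriction to $C$ already computes cohomology, so
$$
\cH^1(\ee_\cV^*\TT_{\cX/\cC_\DD}|_C)=\coker\bl d\kappa|_C\br.
$$
Writing $t\in\CC$ for the value of $\ti t$ at $\xi$, $\phi=(\phi_i)$ for the restriction of the universal section to $C$, and $L=\sL_\cV|_C$, the whole lemma reduces to showing that the map of sheaves
$$
d\kappa|_C\colon L^{\oplus(n+1)}\oplus L^{\otimes m}\lra L^{\otimes m},\qquad(\mathring\phi_i,\mathring b)\longmapsto\sum_i\tfrac{\partial q}{\partial\phi_i}(\phi)\,\mathring\phi_i-t\,\mathring b,
$$
is surjective on the complement of a finite subset of $C$.

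The heart of the argument is a case split on $t$. If $t\ne 0$, multiplication by $-t$ on the $L^{\otimes m}$ summand of the source is already a sheaf isomorphism, so $d\kappa|_C$ is surjective on all of $C$ and the cokernel vanishes. If $t=0$, the defining relation $q(u)-\ti t\,y=0$ of $\cK^\ep_{g,\ell}(V,d)$ specializes at $\xi$ to $q(\phi)=0\in\Gamma(C,L^{\otimes m})$, so $\phi$ factors through the affine cone $C_X=(q=0)\sub\CC^{n+1}$. By the quasimap nondegeneracy condition built into Definition \ref{stable qmap}, the base locus $B\sub C$ of $\phi$ is a finite set, and on $C\setminus B$ the section $\phi$ lands in $C_X\setminus\{0\}$. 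Smoothness of $X\sub\PP^n$ forces $C_X\setminus\{0\}$ to be smooth, so the gradient $\bl\partial q/\partial\phi_1(\phi),\dots,\partial q/\partial\phi_{n+1}(\phi)\br$ is nowhere zero on $C\setminus B$. Hence the first component of $d\kappa|_C$ is already surjective on $C\setminus B$, and $\coker(d\kappa|_C)$ is supported on the finite set $B$, hence is torsion.

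The only step of substance is the $t=0$ case, and it rests on exactly the two ingredients already in play throughout the paper: the nondegeneracy of a quasimap (finiteness of the zero set of $\phi$) and the smoothness of $X\sub\PP^n$ (smoothness of the affine cone off the origin). The compatibility of cohomology with restriction is automatic because $\cK^\bullet_1$ is presented by locally free sheaves, so no derived subtlety intervenes, and I do not anticipate any serious obstacle beyond the bookkeeping of the case split.
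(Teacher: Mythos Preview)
Your proposal is correct and follows the same core idea as the paper: identify $\cH^1(\ee_\cV^*\TT_{\cX/\cC_\DD}|_C)$ with $\coker(d\kappa|_C)$ and show this map is surjective off the finite base locus $B$ of $(u_i)$, using the quasimap nondegeneracy condition together with smoothness of $X$.

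The only difference is that your case split on $t$ is unnecessary. The paper argues uniformly: since $X=(q=0)\sub\PP^n$ is smooth, the gradient $(\partial q/\partial x_1,\dots,\partial q/\partial x_{n+1})$ vanishes only at the origin of $\CC^{n+1}$, so wherever $(u_i(z))\ne 0$ the first block of $d\kappa|_C$ is already surjective, regardless of the value of $t$. In your $t=0$ case you route through the observation that $q(\phi)=0$ forces $\phi$ into $C_X$ and then invoke smoothness of $C_X\setminus\{0\}$; this is correct but the detour is not needed, since the nonvanishing of the gradient off the origin holds on all of $\CC^{n+1}$, not just on $C_X$. Your argument has the virtue of being more explicit about why the $\sR$-summand contributes nothing to $\cH^1$ and why naive restriction computes cohomology, points the paper leaves implicit.
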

\begin{proof}
By the definition of $\cV$, for each point $\xi=((u_i), t,y,p)\in\cV$
there exists a finite set $B\subset C$ such that $(u_i(z))\neq 0$ for every $z\notin B$. Therefore $d\kappa$ is surjective when restricts on $C\setminus B$.
\end{proof}

\begin{prop}\label{ob-WV}
The   $\cvgp\to\DD$ has a   relative perfect obstruction theory
$$\phi_{\cvgp/\DD}: \TT_{\cvgp/\DD}\lra \EE_{\cvgp/\DD}:=
R^\bullet \pi_{\cV\ast} \ee_{\cV}^\ast \TT_{\cX/\cC_{\DD}}.  $$
Its specialization at $c\ne 0\in \Ao$ (resp. $0\in \Ao$) is the
$\phi_{\cpgp/\cD}$ (resp. $\phi_{\cN/\cD}$).
\end{prop}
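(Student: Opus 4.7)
The plan is to apply Proposition~\ref{deformation} with $\fX=\cvgp$, $\fY=\DD$, and $\cZ=\cX\to\cC_{\DD}$, and then to check the perfectness and the two specialization statements separately. The hypothesis of Proposition~\ref{deformation} requires $\cX\to\cC_{\DD}$ to be flat. Since $\cX=\cX'\times_{\cC_{\DD}}\Vb(\sR_{\DD})$ and $\Vb(\sR_{\DD})\to\cC_{\DD}$ is a flat vector bundle, it suffices to show $\cX'\to\cC_{\DD}$ is flat. By construction $\cX'$ is cut out in the rank-$(n+2)$ vector bundle $\Vb(\sL^{\oplus(n+1)}_{\DD}\oplus\sL^{\otimes m}_{\DD})\to\cC_{\DD}$ by the single equation $\Delta=q(z)-ty$, which is a section of the pullback of $\sL^{\otimes m}_{\DD}$. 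Over any geometric point of $\cC_{\DD}$ the zero locus of $\Delta$ is either the graph $\{y=q(z)/t\}\cong\AA^{n+1}$ (when $t\ne 0$) or $C_X\times\AA^1$ (when $t=0$); both fibres have the expected dimension $n+1$. Hence $\Delta$ is a relative regular element, $\cX'\hookrightarrow\Vb(\sL^{\oplus(n+1)}_{\DD}\oplus\sL^{\otimes m}_{\DD})$ is a relative effective Cartier divisor, and $\cX'\to\cC_{\DD}$ is flat. Proposition~\ref{deformation} then produces $\phi_{\cvgp/\DD}$ with target $\EE_{\cvgp/\DD}=R^\bullet\pi_{\cV\ast}\ee_{\cV}^\ast\TT_{\cX/\cC_{\DD}}$.

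For perfectness, the computation preceding the statement shows $\ee_{\cV}^\ast\TT_{\cX/\cC_{\DD}}\cong\cK^\bullet_1\oplus[\sR_{\cV}\to 0]$ is a complex of locally free sheaves concentrated in cohomological degrees $[0,1]$. Since $\pi_\cV$ is proper with one-dimensional fibres, $R\pi_{\cV\ast}$ preserves perfectness, and the spectral sequence $E_2^{i,j}=R^i\pi_{\cV\ast}\cH^j\Rightarrow R^{i+j}\pi_{\cV\ast}(\ee_{\cV}^\ast\TT_{\cX/\cC_{\DD}})$ can only contribute in degree $\ge 2$ via $R^1\pi_{\cV\ast}\cH^1(\cK^\bullet_1)$. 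Lemma~\ref{tor} says $\cH^1(\ee_{\cV}^\ast\TT_{\cX/\cC_{\DD}})$ is fibrewise torsion, hence has zero-dimensional support on each fibre and no $H^1$; therefore $R^1\pi_{\cV\ast}\cH^1=0$ and $\EE_{\cvgp/\DD}$ has amplitude $[0,1]$, as required for a relative perfect obstruction theory.

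For the specializations, one uses flat base change in Proposition~\ref{deformation} along the inclusions $\{c\}\hookrightarrow\AA^1$, which reduces the problem to identifying $\ee_{\cV}^\ast\TT_{\cX/\cC_{\DD}}|_c$. When $c\ne 0$, the differential $d\kappa|_{t=c}=(\partial q/\partial z,\,-c)$ has an invertible summand $-c$ on $\sL^{\otimes m}_{\cpgp}$, so $\cK^\bullet_1|_c$ is quasi-isomorphic to $\sL^{\oplus(n+1)}_{\cpgp}$ placed in degree zero; combining with the $\sR_{\cpgp}$-summand and pushing forward produces exactly $\EE_{\cpgp/\cD}$. When $c=0$ the equation $\Delta$ reduces to $q(z)$, which no longer involves $y$, so $\cX|_0\cong\cZ_X\times_{\cC_{\cD}}\Vb(\sL^{\otimes m}_{\cD})\times_{\cC_{\cD}}\Vb(\sR_{\cD})$, and the relative tangent complex splits as $\TT_{\cZ_X/\cC_{\cD}}\oplus[\sL^{\otimes m}_{\cD}\to 0]\oplus[\sR_{\cD}\to 0]$; taking $R\pi_\ast$ recovers $\EE_{\cN/\cD}$, namely the quasimap-to-$X$ obstruction theory together with free $y$- and $P$-field summands.

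The principal obstacle is establishing the relative flatness of $\cX'\to\cC_{\DD}$ uniformly across the degeneration locus $t=0$, where the fibre of $\Delta=0$ jumps qualitatively from a graph to a cone over $X$; this is precisely the step that forces the generalization from smooth $\cZ/\cC$ in \cite{CL} to the flat case treated in Proposition~\ref{deformation}. Once flatness and the dimension count are in hand, the perfectness statement is forced by Lemma~\ref{tor} and the two specializations are straightforward computations with the tangent complex of $C(V)/\AA^1$.
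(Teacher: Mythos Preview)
Your argument is correct and follows the same route as the paper: invoke Proposition~\ref{deformation} to produce $\phi_{\cvgp/\DD}$, bound the amplitude of $\EE_{\cvgp/\DD}$ using Lemma~\ref{tor}, and treat the specializations by restriction.

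A few small comments. Your explicit verification that $\cX'\to\cC_{\DD}$ is flat, via the relative Cartier divisor argument, is a welcome addition; the paper simply asserts that Proposition~\ref{deformation} applies without checking the hypothesis. The phrase ``flat base change along $\{c\}\hookrightarrow\AA^1$'' is a misnomer, since a closed point is not flat over $\AA^1$; what is actually used (and what the paper invokes) is the functoriality of the direct image cone construction, or equivalently derived base change for the proper morphism $\pi_\cV$. Finally, note that at $c=0$ the paper simply \emph{defines} $\EE_{\cN/\cD}:=\iota_0^\ast\EE_{\cvgp/\DD}$, so there is nothing to identify there; your explicit splitting of $\TT_{\cX|_0/\cC_\cD}$ is correct and is indeed used later in the paper, but it is not needed to establish the proposition as stated.
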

\begin{proof} Since $\cvgp$ is an open substack of the direct image cone
$C(\pi_{\DD\ast}\cX)$, Proposition \ref{deformation} provides a canonical  obstruction theory $\phi_{\cvgp/\DD}$. Let $C$ be a geometric fiber of $\cC_{\cV}\to \cV$ over arbitrary closed point $\xi\in\cV$. As $\ee^*_{\cV}\TT_{\cX/\cC_{\DD}}|_{C}$ is  two-term, the  sequence $$\cH^0(\ee^*_{\cV}\TT_{\cX/\cC_{\DD}}|_{C})\to\ee^*_{\cV}\TT_{\cX/\cC_{\DD}}|_{C}\to\cH^1(\ee^*_{\cV}\TT_{\cX/\cC_{\DD}}|_{C})[-1]\mapright{+}$$
is a distinguished triangle.

 Since $H^1(C,\cH^1(\ee^*_{\cV}\TT_{\cX/\cC_{\DD}}|_{C}))$ vanishes by Lemma \ref{tor}, $H^2(\ee^*_{\cV}\TT_{\cX/\cC_{\DD}}|_C)=0$. Thus $
R^\bullet \pi_{\cV\ast} \ee_{\cV}^\ast \TT_{\cX/\cC_{\DD}}$ is perfect of amplitude $[0,1]$.

 When $c\ne0$ using $\iota_c: \cvgp=\cvgp\times_{\Ao}c \to \cvgp$, the functoriality of the construction implies that
$\phi_{\cpgp/\cD}$  is the composition of
$\TT_{\cpgp/\cD}\to \iota_c\sta \TT_{\cvgp/\DD}$ with
$$\iota_c\sta(\phi_{\cvgp/\ti{\cD}_{g,k}}):\iota_c\sta \TT_{\cvgp/\DD}\lra \iota_c\sta \EE_{\cvgp/\DD}\cong\EE_{\cpgp/\cD}.
$$
In case $c=0$, we define $\EE_{\cN/\cD}\defeq \iota_0\sta \EE_{\cvgp/\DD}$. This proves the Proposition.
\end{proof}

 Let $\cC_{\cN}=\cC_{\cV}\times_{\cV}\cN$, and $\pi_{\cN}:\cC_{\cN}\to\cN$ be the restriction of $\pi_{\cV}$. If $c= 0$,
$$\cK_1^{\bullet}\cong \big[\sL^{\oplus (n+1)}_{\cV}\mapright{d\kappa_1} \sL^{\otimes m}_{\cV}\big]|_{\cC_{\cN}}\oplus  \big[\sL^{\otimes m}_{\cV}\lra 0\big]|_{\cC_{\cN}},$$
where $d\kappa_1$ restricted to $\cC_{\cN}$ sends $(\mathring \phi_i)$ to $\sum \frac{\partial q(\phi_i)}{\partial \phi_i} \mathring \phi_i$.
Denote by
$$
\cK_2^{\bullet}=\big[\sL^{\oplus (n+1)}_{\cV}\mapright{d\kappa_1} \sL^{\otimes m}_{\cV}\big]|_{\cC_{\cN}}.
$$

Then
\begin{align*}H^1(\EE_{\cN/\cD})\cong R^1\pi_{\cN*}(\cK_2^{\bullet})\oplus R^1\pi_{\cN*}(\sL^{\otimes m}_{\cV}|_{\cC_{\cN}})\oplus R^1\pi_{\cN*}(\sL^{\vee\otimes m}_{\cV}\otimes\omega_{\cC_{\cV}/\cV}|_{\cC_{\cN}}).\end{align*}

\subsection{Family cosection of $\Ob_{\cvgp/\DD}$}  Let $h$ be a bi-linear morphism of bundles
$$h: \Vb(\sL^{\oplus (n+1)}_{\DD}\oplus \sL^{\otimes m}_{\DD}\oplus \sR_{\DD})\mapright{(\pr_2,\pr_3)}
\Vb(\sL^{\otimes m}_{\DD})\times_{\cC_{\DD}}\Vb(\sR_{\DD})\lra \Vb(\omega_{\cC_{\DD}/\DD}).
$$
 where $\pr_i$ is the $i$-th projection, and the second arrow is induced by
$\sL^{\otimes m}_{\DD}\otimes\sR_{\DD}\to\omega_{\cC_{\DD}/\DD}$. Using that the family $\cX\to \cC_{\DD}$ in \eqref{ZZ}
is a subfamily
$$\cX
  \sub \Vb(\sL^{\oplus (n+1)}_{\DD}\oplus \sL^{\otimes m}_{\DD}\oplus \sR_{\DD}),
$$
composing with $h$, we obtain a $\cC_{\DD}$-morphism $\cX\lra \Vb(\omega_{\cC_{\DD}/\DD}).$ \black
\begin{lemm}
The homomorphism $\cX\lra \Vb(\omega_{\cC_{\DD}/\DD})$ induces a homomorphism
$$\sigma^\bullet: \EE_{\cvgp/\DD}\lra  R^{\bullet}\pi_{\cvgp\ast}\omega_{\cC_{\cvgp}/\cvgp}
$$
whose restriction to $\cvgp\times_{\Ao} c\cong \cpgp$, $c\ne 0$, is  proportional (by an element in $\CC\sta$)
to $\sigma_1^\bullet$ in \eqref{use}.
\end{lemm}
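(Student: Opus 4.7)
The plan is to construct $\sigma^\bullet$ by mimicking the procedure of \eqref{use} applied to the universal family $\cvgp/\DD$, and then verify the proportionality with $\sigma_1^\bullet$ by a direct coordinate comparison on the fiber $t=c$.

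Starting from the $\cC_\DD$-morphism obtained by composing the inclusion $\cX\sub\Vb(\sL_\DD^{\oplus(n+1)}\oplus\sL_\DD^{\otimes m}\oplus\sR_\DD)$ with $h$, differentiation yields
$$\TT_{\cX/\cC_\DD}\lra h^\ast\TT_{\Vb(\omega_{\cC_\DD/\DD})/\cC_\DD}.$$
Since the relative tangent complex of the total space of a line bundle over $\cC_\DD$ is canonically the pullback of that line bundle, the right-hand side is identified with the pullback of $\omega_{\cC_\DD/\DD}$ to $\cX$. Pulling back via $\ee_{\cV}:\cC_{\cV}\to\cX$ and applying $R^\bullet\pi_{\cvgp\ast}$ produces
$$\sigma^\bullet:\EE_{\cvgp/\DD}=R^\bullet\pi_{\cvgp\ast}\ee_{\cV}^\ast\TT_{\cX/\cC_\DD}\lra R^\bullet\pi_{\cvgp\ast}\omega_{\cC_{\cvgp}/\cvgp},$$
exactly parallel to \eqref{use}.

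For the proportionality, I would restrict everything to $\cvgp_c=\cvgp\times_{\Ao}\{c\}\cong\cpgp$ for $c\in\CC^\ast$. The defining equation $q(x_i)-ty_0=0$ cutting out $\cX$ specializes on the fiber $t=c$ to $y_0=c^{-1}q(x_i)$, so the composite $h$ restricted to $\cX|_{t=c}$ sends $(x_i,c^{-1}q(x_i),p)$ to $c^{-1}\,p\cdot q(x_i)$, which is precisely $c^{-1}$ times the morphism $\Lambda(x,p)=p\cdot q(x)$ of \eqref{co}. Since the entire pullback-pushforward construction is functorial in the base change $\cvgp_c\hookrightarrow\cvgp$, and the canonical identification $\EE_{\cvgp/\DD}|_{\cvgp_c}\cong\EE_{\cpgp/\cD}$ is furnished by Proposition \ref{ob-WV}, the scalar factor $c^{-1}$ propagates through the derived machinery: $\sigma^\bullet|_{\cvgp_c}=c^{-1}\sigma_1^\bullet$, establishing the claim.

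The main technical point is the bookkeeping in the derived base change: one must verify $R^\bullet\pi_{\cvgp\ast}\omega_{\cC_{\cvgp}/\cvgp}|_{\cvgp_c}\cong R^\bullet\pi_{\cpgp\ast}\omega_{\cC_{\cpgp}/\cpgp}$ and that $\sigma^\bullet$ commutes with this base change, so that the pointwise identity $h|_{t=c}=c^{-1}\Lambda$ really yields the claimed derived-level identity. This is a routine application of flat base change for the cotangent complex (\cite[Coro.\, 2.2.3]{Illusie}), exactly as already invoked in the proof of Proposition \ref{deformation}.
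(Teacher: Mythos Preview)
Your proposal is correct and follows exactly the approach the paper intends: the paper's own proof reads ``The proof is exactly as in Section 3.2. We omit it here,'' and you have carried out precisely that construction, together with the explicit identification $h|_{\cX,\,t=c}=c^{-1}\Lambda$ under the isomorphism $\cX|_{t=c}\cong\fZ$, $(x,p)\mapsto(x,c^{-1}q(x),p)$, which pins down the proportionality constant. One small remark: the base change in your final paragraph is along the closed point $\{c\}\hookrightarrow\Ao$ rather than a flat map, so the compatibility of $R^\bullet\pi_{\cvgp\ast}\omega_{\cC_{\cvgp}/\cvgp}$ with restriction is better justified by noting that $R^1\pi_{\cvgp\ast}\omega_{\cC_{\cvgp}/\cvgp}\cong\sO_{\cvgp}$ is locally free (so cohomology and base change commute), rather than by invoking flat base change for the cotangent complex.
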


\begin{proof}
The proof is exactly as  in Section 3.2. We omit it here.
\end{proof}

We denote
\beq\label{si-V}
\sigma=H^1(\sigma^\bullet): \Ob_{\cvgp/\DD}\defeq H^1(\EE_{\cvgp/\DD})\lra R^1\pi_{\cvgp\ast}\omega_{\cC_{\cvgp}/\cvgp}
\cong \sO_{\cvgp}.
\eeq
 For $c\ne 0$, the restriction of
$\sigma$  over $\cvgp\times_{\Ao} c\cong \cpgp$ with $c\ne 0$.
 equals to $\sigma_1$. For $c=0$ the restriction $\sigma_0:=\sigma|_{c=0}$ admits the following simple expression.

   Every closed point $\xi\in\cN$  can be represented by
$$((\phi_i),b,p,0)\in  H^0(L^{\oplus (n+1)})\times H^0(L^{\otimes m})\times H^0(L^{\vee\otimes m}\otimes \omega_C)\times H^0(C,\sO_C)
$$
where $(C,p_1,\cdots,p_\ell ,L)\in \cD$ denotes the point under $\xi$. The restriction  $$\sigma_0|_\xi: \Ob_{\cN/\cD}|_\xi \to \CC$$
is  the composite of
 $$ \Ob_{\cN/\cD}|_\xi\mapright{\varsigma|_\xi} H^1(L^{\oplus (n+1)})\oplus H^1(L^{\otimes m})\oplus H^1(L^{\vee\otimes m}\otimes\omega_C)
$$
with the pairing
$$H^1(L^{\oplus (n+1)})\oplus H^1(L^{\otimes m})\oplus H^1(L^{\vee\otimes m}\otimes\omega_C)\lra H^1(\omega_C)
$$
defined via $((\mathring \phi_i), \mathring b, \mathring p)\mapsto  \mathring b\cdot p+b\cdot \mathring p$.

Thus $\sigma_0|_\xi=0$ if and only if $p=0$ and $b=0$. The loci $Deg(\sigma_0)$ is $\cN\cap (p=b=0)=\qme(X,d)$. Since $\sigma|_{c\neq 0}\cong \sigma_1$ we conclude
$$Deg(\sigma)=\qme(X,d)\times\Ao\sub \cvgp.$$

 Let $\ti \fq:\cvgp\to \DD$ be the projection.  The
 natural
$\ti\fq^\ast\TT_{\DD}\to \TT_{\cvgp/\DD}[1]$
composed with $\phi_{\cvgp/\DD}:\TT_{\cvgp/\DD}\to \EE_{\cvgp/\DD}$ gives
$\eta: \ti\fq^\ast\TT_{\DD}\lra  \EE_{\cvgp/\DD}[1].$ \black Set
\begin{align*}
\Ob_{\cvgp}\defeq \coker\{ H^0(\eta): \ti\fq^\ast\Omega_{\DD}\dual\lra  H^1(\EE_{\cvgp/\DD})\}.
\end{align*}

 By same reasons as Lemma \ref{cone} and Corollary \ref{lift}, we have the following.
\begin{lemm}\label{family-cone}
The composite vanishes
\begin{align*}
\ti\fq\sta H^0(\TT_{\DD}) \mapright{H^0(\eta)} H^1(\EE_{\cvgp/\DD})\mapright{\sigma}
R^1\pi_{\cvgp\ast}\omega_{\cC_{\cvgp}/\cvgp}.
\end{align*}
Thus the cosection $  \sigma:\Ob_{\cvgp/\DD}\to \sO_{\cvgp}$ lifts to a cosection $\bar\sigma: \Ob_{\cvgp}\to \sO_{\cvgp}$.
\end{lemm}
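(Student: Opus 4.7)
The plan is to mirror the two-step structure used for Lemma \ref{cone} and Corollary \ref{lift}. The second assertion (existence of $\bar\sigma$) is formal: by construction $\Ob_{\cvgp}=\coker H^0(\eta)$, so any cosection on $H^1(\EE_{\cvgp/\DD})$ whose restriction to $\image H^0(\eta)$ vanishes factors uniquely through the cokernel. The whole content therefore lies in proving the displayed vanishing $\sigma\circ H^0(\eta)=0$.

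For the vanishing I would repeat, with $\cD$ replaced throughout by $\DD=\cD\times\Ao$, the derived-category argument already used to establish Lemma \ref{cone}. The composite $\sigma\circ H^0(\eta)$ can be lifted to a morphism of complexes
$$\ti\fq^\ast\TT_{\DD}[-1]\lra \TT_{\cvgp/\DD}\mapright{\phi_{\cvgp/\DD}}\EE_{\cvgp/\DD}\mapright{\sigma^\bullet} R^\bullet\pi_{\cvgp\ast}\omega_{\cC_{\cvgp}/\cvgp},$$
and it suffices to show this derived composite is zero, since then taking $H^1$ yields the stated vanishing. Both $\phi_{\cvgp/\DD}$ and $\sigma^\bullet$ are derived pushforwards of explicit morphisms on the universal curve $\cC_{\cvgp}$ (the former via Proposition \ref{deformation} applied to the evaluation $\ee_\cV$, the latter via the bilinear pairing $h$ followed by the Serre duality identification $R^1\pi_{\cvgp\ast}\omega_{\cC_{\cvgp}/\cvgp}\cong\sO_{\cvgp}$), and the tangent-triangle connecting map $\ti\fq^\ast\TT_{\DD}[-1]\to\TT_{\cvgp/\DD}$ unwraps analogously. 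Thus everything reduces to a cochain-level identity on $\cC_{\cvgp}$.

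The geometric reason this identity holds is that $\sigma^\bullet$ depends only on the fiber bilinear pairing $(b,p)\mapsto b\cdot p\in\omega_{\cC_{\cvgp}/\cvgp}$: for $b\in H^0(\sL^{\otimes m})$ and $p\in H^0(\sL^{\vee\otimes m}\otimes\omega)$ this pairing into $\omega$ is canonical in the pair $(\cC,\sL)$, so the infinitesimal deformations of $(\cC,\sL,t)$ that $\ti\fq^\ast\TT_{\DD}$ parametrises act trivially on it. This is exactly the mechanism underlying the vanishing in \cite[Lemm.\ 3.6]{CL}, and the same chain-level computation applies here.

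The main obstacle I anticipate is the extra $\Ao$-factor in $\DD$, absent in \cite{CL}: one must check that the $t$-deformation direction also contributes zero to the cochain-level composite. This is where one uses that the bilinear pairing $h$ is built from $(\pr_2,\pr_3)$ followed by $\sL^{\otimes m}_{\DD}\otimes\sR_{\DD}\to\omega_{\cC_{\DD}/\DD}$ and ignores the $t$-coordinate entirely — the parameter $t$ appears only in the equation $q(u)-ty=0$ cutting out $\cX\subset\Vb(\sL^{\oplus(n+1)}_{\DD}\oplus\sL^{\otimes m}_{\DD}\oplus\sR_{\DD})$, not in the cosection formula — so varying $t$ moves within the fibers of $\cvgp\to\Ao$ without affecting $\sigma^\bullet$. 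Once this is verified, the \cite[Lemm.\ 3.6]{CL} argument transports verbatim and the lemma follows.
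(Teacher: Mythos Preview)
Your proposal is correct and matches the paper's approach exactly: the paper disposes of this lemma in a single sentence, ``By same reasons as Lemma \ref{cone} and Corollary \ref{lift}'', and your write-up is a faithful expansion of what that sentence means. One small wording slip: you write that ``varying $t$ moves within the fibers of $\cvgp\to\Ao$'', which is backwards---varying $t$ moves \emph{across} fibers---but your actual point (that $h$ factors through $(\pr_2,\pr_3)$ and hence the pairing is insensitive to the $\Ao$-direction of $\TT_{\DD}$) is the right one and is all that is needed beyond the \cite[Lemm.\,3.6]{CL} argument.
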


\subsection{The constancy of the virtual cycles}

 One verifies directly the virtual dimension of $\cvgp$ is $\delta+1$.
Using Lemma \ref{family-cone},   following the convention introduced \black
in Subsection 3.3, we denote by
$$h^1/h^0(\EE_{\cV/\DD})(\sigma)\sub h^1/h^0(\EE_{\cV/\DD})
$$
the kernel of a cone-stack morphism $h^1/h^0(\EE_{\cV/\DD})\to\CC_\cV$
induced by $\bar\sigma$ defined as in
\eqref{cone-stack}.\footnote{ It is $h^1/h^0(\EE_{\cV/\DD})$ along the degeneracy loci and is the kernel of
$h^1/h^0(\EE_{\cV/\DD})\to\sO_{\cvgp}$ induced by $\sigma$ away from the degeneracy loci.} By \cite{KL} we have the localized Gysin map
$$0^!_{\bar\sigma,\mathrm{loc}}: A\lsta h^1/h^0(\EE_{\cV/\DD})(\sigma)\lra A\lsta (\qme(X,d)\times\Ao).
$$

\begin{defi} We define the localized virtual cycle of $(\cvgp,\bar\sigma)$ be
$$[\cvgp]\virt:=0^!_{\bar\sigma,\mathrm{loc}}([\bC_{\cvgp/\DD}])\in A_{\delta+1} (\qme(X,d)\times\Ao).
$$
 \end{defi}
Let $\jmath_c:c\to \AA^1$ be the inclusion, then we have the following theorem.
\begin{theo}\label{constancy} $\jmath_c^![\cvgp]\virt=[\cvgp_c]\virt$.
 \end{theo}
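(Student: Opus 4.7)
The plan is to deduce the theorem from the compatibility of Kiem--Li's cosection-localized Gysin map with refined Gysin pullback along regular embeddings. Pulling back along the regular embedding $\jmath_c: \{c\} \hookrightarrow \Ao$ of codimension $1$, one obtains a Cartesian square with $\cvgp_c = \cvgp \times_\Ao \{c\}$ and induced closed embedding $\iota_c: \cvgp_c \hookrightarrow \cvgp$. The first task is to record three compatibilities under this specialization: (i) $\iota_c^*\EE_{\cvgp/\DD}$ canonically agrees with $\EE_{\cvgp_c/\cD}$, which is built into Proposition \ref{ob-WV}; (ii) $\iota_c^*\bar\sigma$ coincides with $\bar\sigma_c$, immediate from the bundle-level morphism $h$, which is defined over $\DD$ and restricts to the analogous morphism over $\cD$; and (iii) the refined Gysin image $\iota_c^![\bC_{\cvgp/\DD}]$ equals $[\bC_{\cvgp_c/\cD}]$ in the ambient bundle-stack. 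The last item is a Vistoli-type base change for normal cones along $\jmath_c$, checked via the direct-image-cone description of $\cvgp$ as an open substack of $C(\pi_{\DD *}\cX)$, whose Cartesian restriction over $\cD \times \{c\}$ coincides with the direct image cone defining $\cvgp_c$.

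Next I invoke the functoriality of Kiem--Li's localized Gysin map with respect to pullback along regular embeddings compatible with the cosection: the diagram
\[
\begin{CD}
A_* h^1/h^0(\EE_{\cvgp/\DD})(\sigma) @>0^!_{\bar\sigma,\loc}>> A_*(\qme(X,d) \times \Ao) \\
@V\iota_c^!VV @VV\jmath_c^!V \\
A_* h^1/h^0(\EE_{\cvgp_c/\cD})(\sigma_c) @>0^!_{\bar\sigma_c,\loc}>> A_*\qme(X,d)
\end{CD}
\]
commutes. Chaining the three compatibilities above with this diagram yields
\[
\jmath_c^![\cvgp]\virt = \jmath_c^!\,0^!_{\bar\sigma,\loc}[\bC_{\cvgp/\DD}] = 0^!_{\bar\sigma_c,\loc}\,\iota_c^![\bC_{\cvgp/\DD}] = 0^!_{\bar\sigma_c,\loc}[\bC_{\cvgp_c/\cD}] = [\cvgp_c]\virt.
\]

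The main obstacle will be establishing the commutativity of the Gysin diagram in the Kiem--Li framework. This amounts to tracking the localized Gysin construction through the deformation to the normal cone of $\cvgp_c \hookrightarrow \cvgp$: one lifts the kernel bundle-stack $h^1/h^0(\EE_{\cvgp/\DD})(\sigma)$ to the deformation space and verifies that the specialization of $[\bC_{\cvgp/\DD}]$ lies in the analogous kernel bundle-stack for $(\EE_{\cvgp_c/\cD},\bar\sigma_c)$. This cosection-compatible specialization argument is the technical heart of the simplified proof and replaces the more intricate degeneration analysis of \cite{CL1}.
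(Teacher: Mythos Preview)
Your strategy is sound and the identity you aim for is correct, but the route differs from the paper's in a meaningful way.

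The paper does not stay with the obstruction theory $\EE_{\cvgp/\DD}$ relative to $\DD=\cD\times\Ao$. Instead it first manufactures a perfect obstruction theory $\EE_{\cvgp/\cD}$ for $\cvgp$ relative to $\cD$ alone, by taking the mapping cone of $\ti\fq^{\ast}\TT_{\DD/\cD}[-1]\to\EE_{\cvgp/\DD}$ and invoking a short exact sequence of cone stacks as in \cite{KKP}; the two relative theories then yield the same cosection-localized virtual class. With both $\cvgp$ and $\cvgp_c$ now carrying obstruction theories over the \emph{common} base $\cD$, the paper assembles the compatibility triangle \cite[(5.1)]{KL} between $\EE_{\cvgp_c/\cD}$ and $\EE_{\cvgp/\cD}|_{\cvgp_c}$ and applies \cite[Thm.\ 5.2]{KL} as a black box.

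Your approach keeps the bases distinct ($\DD$ for $\cvgp$, $\cD$ for $\cvgp_c$) and instead asks for (iii) together with the commutativity of $0^!_{\bar\sigma,\mathrm{loc}}$ with $\jmath_c^!$. Together these two claims amount to a rederivation of the content of \cite[Thm.\ 5.2]{KL} in a slightly different packaging, so what you save by avoiding the \cite{KKP}-style base change you pay back by having to run the cosection-compatible specialization argument by hand. One caution on your item (iii): the direct-image-cone description only tells you that the square of moduli spaces is Cartesian over $\Ao$; the equality $\jmath_c^![\bC_{\cvgp/\DD}]=[\bC_{\cvgp_c/\cD}]$ of cone cycles inside the pulled-back bundle stack is a genuine statement about intrinsic normal cones under lci pullback and needs the deformation-to-normal-cone argument, not merely the identification of the fibre.
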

 \begin{proof} Let $\cvgp/\cD$ be the composition of $\cvgp\to\DD$ with the projection $\DD=\cD\times\Ao\to \cD$.
 Let $\ff$ be the composition of  $ \ti \fq\sta\TT_{\DD/\cD}[-1]\to \TT_{\cvgp/\DD}$ with $\phi_{\cvgp/\DD}$. We have a diagram of distinquished triangles
  \beq\label{dia00}
  \begin{CD}
  \ti \fq\sta\TT_{\DD/\cD}[-1]@>{\ff}>>\EE_{\cvgp/\DD}@>{\fg}>>\EE_{\cvgp/\cD}\\
  @AA{||}A@AA{\phi_{\cvgp/\DD}}A@AA{\phi_{\cvgp/\cD}}A\\
  \ti \fq\sta\TT_{\DD/\cD}[-1]@>>>\TT_{\cvgp/\DD}@>>>\TT_{\cvgp/\cD}
  \end{CD}
  \eeq
   where $\EE_{\cvgp/\cD}$ is  the mapping cone of  $\ff$, and  $\phi_{\cvgp/\cD}$  is given by mapping cone axiom. From
   diagram \ref{dia00} we have the following commutative diagram
 $$
  \begin{CD}
  0@>>> h^{0}(\EE_{\cvgp/\DD})@>>> h^{0}(\EE_{\cvgp/\cD})@>>> \\
  @.@AA{\phi^0_{\cvgp/\DD}}A@AA\phi^0_{\cvgp/\cD}A\\
  0@>>>h^{0}(\TT_{\cvgp/\DD})@>>>h^{0}(\TT_{\cvgp/\cD})@>>>
  \end{CD}
  $$
  $$
  \begin{CD}
  h^{1}(\ti \fq\sta\TT_{\DD/\cD}[-1])@>>>h^{1}(\EE_{\cvgp/\DD})@>>> h^{1}(\EE_{\cvgp/\cD})@>>>0 \\
  @AA{||}A@AA{\phi^1_{\cvgp/\DD}}A@AA{\phi^1_{\cvgp/\cD}}A\\
 h^{1}(\ti \fq\sta\TT_{\DD/\cD}[-1])@>>> h^{1}(\TT_{\cvgp/\DD})@>>>h^{1}(\TT_{\cvgp/\cD})@>>>0.
  \end{CD}
 $$
 By chasing diagrams \black we know $\phi_{\cvgp/\cD}$ is a relative perfect obstruction theory. As in the proof of \cite[Prop.\, 3]{KKP}, we have a short exact sequence of cone stack
$$
h^{1}/h^0(\ti \fq\sta\TT_{\DD/\cD}[-1])\to\fC_{\cvgp/\DD}\to\fC_{\cvgp/ \cD},
$$
as well as the similar exact sequence relating $h^{1}/h^0(\EE_{\cvgp/\DD})$ with $ h^{1}/h^0(\EE_{\cvgp/\cD})$. Note $\fC_{\cvgp/\DD}$ is the pullback of $\fC_{\cvgp/ \cD}$ along the projection $h^{1}/h^0(\EE_{\cvgp/\DD})\to h^{1}/h^0(\EE_{\cvgp/\cD})$. Let $\fq_1:\DD\to \cD$ and $\fq_2:\DD\to \AA^1$ be the projections, then $\LL_{\DD}=\fq_1^*\LL_{\cD}\oplus\fq_2^*\Omega_{\AA^1}$. Therefore $\ti \fq\sta\TT_{\DD/\cD}[-1]=\ti\fq\sta\fq_2^*\Omega^{\vee}_{\AA^1}$, then by Lemma \ref{family-cone}, $\sigma$ descended to a cosection $\sigma_{\cvgp/\cD}$ for $\Ob_{\cvgp/\cD}:=H^1(\EE_{\cvgp/\cD})$. Therefore the cosection localized virtual cycle of such $\cvgp/\cD$ coincide with that defined by $0^!_{\sigma,\text{loc}}[\fC_{\cvgp/\DD}]$.

   Restrict \eqref{dia00} on $\cvgp_c$  we obtain a diagram
  \beq\label{dia5}
  \begin{CD}
   (\ti \fq\sta\TT_{\DD/\cD}[-1])|_{\cvgp_c}@>>>\EE_{\cvgp/\DD}|_{\cvgp_c}=\EE_{\cvgp_c/\cD}@>{\fg}>>\EE_{\cvgp/\cD}|_{\cvgp_c}\\
  @AAA@AAA@AA{\phi_{\cvgp/\cD}|_{\cvgp_c}}A\\
  (\ti \fq\sta\TT_{\DD/\cD}[-1])|_{\cvgp_c}@>>>\TT_{\cvgp/\DD}|_{\cvgp_c}@>>>\TT_{\cvgp/\cD}|_{\cvgp_c}\\
  @AA{\alpha}A@AA{\beta}A@AA{||}A\\
  \TT_{\cvgp_c/\cvgp}@>>>\TT_{\cvgp_c/\cD}@>>>\TT_{\cvgp/\cD}|_{\cvgp_c}
  \end{CD}
  \eeq
 where  all arrows in the second and the third rows are the natural morphisms induced by the fiber product of $\cD$-stacks
 $\cvgp_c=\cD\times_{\DD}\cvgp$ (here $\cD\to \DD$ is the product of $c\mapright{\jmath_c}\Ao$ with $\DD\to \Ao$).
 The morphism  $\alpha$ is constructed by mapping cone axiom applied to the second and the third rows of \eqref{dia5}.
  The first and the third row of \eqref{dia5} then give  the compatibility  between the deformation theories of $\cvgp/\cD$ and $\cvgp_c/\cD$   required in \cite[(5.1)]{KL}. Apply  \cite[Thm.\, 5.2]{KL}, we have $\jmath_c^![\cvgp]\virt=[\cvgp_c]\virt.$
 \end{proof}
\begin{coro}\label{shriek}
Under the shriek operation of cycles, for $c\ne 0$,
$$\jmath_c^{!} ([\cvgp]\virt)=[\cpgp]\virt\in  A_{\delta} \qme(X,d),
\quad
\jmath_0^{!}([\cvgp]\virt)=[\cN]\virt\in  A_{\delta} \qme(X,d).
$$
\end{coro}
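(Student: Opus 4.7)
The plan is to deduce the corollary as an immediate consequence of Theorem \ref{constancy}, once the fiber $\cvgp_c$ is identified with $\cpgp$ (for $c\ne 0$) or with $\cN$ (for $c=0$) compatibly with obstruction theories and cosections. Concretely, Theorem \ref{constancy} gives $\jmath_c^![\cvgp]\virt=[\cvgp_c]\virt$ for every $c\in\Ao$, where the right-hand side is built from the restricted relative obstruction theory $\iota_c\sta\phi_{\cvgp/\DD}$, the restricted cosection $\sigma|_{\cvgp_c}$, and the normal cone of $\cvgp_c/\cD$.

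First, for $c\ne 0$, recall from the construction in Section 3 that $\cvgp\times_{\Ao}c\cong\cpgp$ as a stack over $\cD$. By the last assertion of Proposition \ref{ob-WV}, this isomorphism carries $\iota_c\sta\phi_{\cvgp/\DD}$ to $\phi_{\cpgp/\cD}$. By the lemma preceding \eqref{si-V}, the restriction $\sigma|_{\cvgp_c}$ equals $\sigma_1$ up to a nonzero scalar; such a rescaling changes neither the degeneracy locus $\qme(X,d)$, nor the cone-stack $h^1/h^0(\EE_{\cvgp_c/\cD})(\sigma|_{\cvgp_c})=h^1/h^0(\EE_{\cpgp/\cD})(\sigma_1)$, nor the value of the Kiem--Li localized Gysin map (the trivialization of $\CC_{\cU}$ differs by an automorphism of $\sO_{\cU}$). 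Hence $[\cvgp_c]\virt=[\cpgp]\virt$ inside $A_\delta\qme(X,d)$.

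Second, for $c=0$, Proposition \ref{ob-WV} identifies $\iota_0\sta\phi_{\cvgp/\DD}$ with $\phi_{\cN/\cD}$ by definition, and the explicit formula for $\sigma_0=\sigma|_{c=0}$ computed after \eqref{si-V} is precisely the cosection used to define $[\cN]\virt$. Thus $[\cvgp_0]\virt=[\cN]\virt$. Combining these identifications with $\jmath_c^![\cvgp]\virt=[\cvgp_c]\virt$ from Theorem \ref{constancy} yields both displayed equations.

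The only subtle point, hence the step I would be most careful about, is the compatibility at $c\ne 0$: one must check that rescaling the cosection by a unit in $\CC\sta$ produces the same localized cycle. This is standard for Kiem--Li's construction because the localized Gysin map is defined via the kernel substack $h^1/h^0(\EE)(\sigma)$ together with a section of a trivial line bundle whose choice of generator is immaterial; nonetheless, writing this out explicitly is the one place where the argument is not purely formal.
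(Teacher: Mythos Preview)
Your proposal is correct and matches the paper's approach: the paper treats the corollary as an immediate consequence of Theorem \ref{constancy} with no separate proof, relying on the fiberwise identifications $\cvgp_c\cong\cpgp$ for $c\ne 0$ and $\cvgp_0=\cN$ already set up in Section 4, together with Proposition \ref{ob-WV} and the lemma preceding \eqref{si-V} for compatibility of obstruction theories and cosections. Your explicit verification of these identifications, including the harmless rescaling of the cosection, simply spells out what the paper leaves implicit.
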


Here $[\cN]\virt$ is the localized virtual cycle using the obstruction theory of
$\cN$ induced by the restricition of $\phi_{\cvgp/\DD}$ (Prop \ref{ob-WV}) and the cosection
$\sigma_0=\sigma|_{\cN}$.

\section{The virtual cycles of   $\cN$ and $\qme(X,d)$}
\def\lcng{_{\cng}}
  In the special case of \cite[Thm.\, 5.7]{CL} there are no markings and  $X$ is the quintic CY hypersurface of $\Pf$. These imply that the  virtual dimension $\delta=0$, and \cite[Thm.\, 5.7]{CL} proved the numerical identity $\deg [\cN]\virt=\deg [\barM_{g}(X,d)]\virt,$ where the proof uses the vanishing of virtual dimension essentially.

   In this section we drop all the conditions and prove a more general property of virtual cycles(Proposition \ref{main1}) to compare $[\cN]\virt$ with $[\qme(X,d)]\virt$.

  \subsection{The virtual cycle of $\cN$}
\def\cqg{\cQ}
By the construction of section 4,
we have an evaluation morphism
$$\ee_{\cN}: \cC_\cN\lra \cX_0=\cX\times\lAo 0.
$$
 and a perfect relative obstruction theory
\begin{align*}
\phi_{\mathcal{N}/\cD}: \TT_{\mathcal{N}/\cD}\lra \EE_{\cngp/\cD}=\iota_0\sta \EE_{\cvgp/\DD}\cong R^{\bullet}\pi_{\mathcal{N}\ast} \ee_\cN^\ast \TT_{\cX_0/\cC_{\cD}}
\end{align*}
 which is
the restriction of $\phi_{\cV/\cD\times \mathbb{A}^1}$ to the fiber over $0\in \Ao$.

\begin{prop}\label{degQ}
The cosection $\sigma_0$ (c.f.\eqref{si-V}) lifts to a cosection
$\bar\sigma_0:\Ob_\cN\to \sO_\cN$. The degeneracy 
loci of   $\bar\sigma_0$ is $\qme(X,d)\sub \cN$,  and is
proper.
\end{prop}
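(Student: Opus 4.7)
The plan is to derive the proposition in three short steps, all by specialising the family-level machinery already developed for $\cvgp/\DD$ to the central fibre $c=0$.

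\textbf{Step 1 (Lifting the cosection).} I would first show $\sigma_0$ factors through $\Ob_\cN$. Lemma~\ref{family-cone} states that the composition
\[
\ti\fq\sta H^0(\TT_\DD) \mapright{H^0(\eta)} H^1(\EE_{\cvgp/\DD}) \mapright{\sigma} R^1\pi_{\cvgp\ast}\omega_{\cC_{\cvgp}/\cvgp}
\]
vanishes. Since $\DD = \cD \times \Ao$ and $\TT_\DD \cong \pi_1^\ast\TT_\cD \oplus \pi_2^\ast\TT_{\Ao}$, restricting to $\cN = \cvgp \times_{\Ao} 0$ via $\iota_0$ and using that $\EE_{\cN/\cD} = \iota_0^\ast \EE_{\cvgp/\DD}$ and $\sigma_0 = \sigma|_\cN$ (by construction), one obtains the vanishing of
\[
\fq_\cN^\ast H^0(\TT_\cD) \lra H^1(\EE_{\cN/\cD}) \mapright{\sigma_0} \sO_\cN,
\]
where $\fq_\cN:\cN\to\cD$ is the tautological morphism. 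Exactly as in Corollary~\ref{lift}, this produces the desired lift $\bar\sigma_0:\Ob_\cN\to\sO_\cN$. (Alternatively, one could rerun the proofs of Lemma~\ref{cone} and Corollary~\ref{lift} directly for $\cN/\cD$.)

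\textbf{Step 2 (Identifying the degeneracy locus).} At an arbitrary closed point $\xi = (C, p_j, L, \phi, b, p) \in \cN$, the equation cutting out $\cN \subset \cvgp$ at $c=0$ forces $q(\phi) = 0$. The explicit description of $\sigma_0|_\xi$ given just before the proposition expresses it as the composite of $\varsigma|_\xi$ with the pairing $((\mathring\phi_i),\mathring b,\mathring p)\mapsto \mathring b\cdot p + b\cdot \mathring p\in H^1(\omega_C)=\CC$. The key observation is that the $\mathring\phi_i$-component does not contribute to this pairing. Moreover, from the splitting of $\EE_{\cN/\cD}$ displayed just before Section 4.3, the groups $H^1(L^{\otimes m})$ and $H^1(L^{\vee\otimes m}\otimes \omega_C)$ appear as direct summands of $\Ob_{\cN/\cD}|_\xi$, and $\varsigma|_\xi$ restricts to the identity on them. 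Therefore $\sigma_0|_\xi = 0$ if and only if both pairings $\mathring b\mapsto \mathring b\cdot p$ and $\mathring p\mapsto b\cdot \mathring p$ vanish identically. By Serre duality these are nondegenerate, so vanishing forces $p=0$ and $b=0$. Combined with $q(\phi)=0$, the degeneracy locus is exactly the closed substack of $\cN$ cut out by $p=b=0$, which is canonically $\qme(X,d)\subset\cN$.

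\textbf{Step 3 (Properness).} By Lemma~\ref{equi}, $\qme(\PP^n,d)$ is proper over $\CC$, and $\qme(X,d)$ is a closed substack (cf.\ Definition~\ref{family1} and the paragraph after it), hence proper.

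\textbf{Main obstacle.} Steps 1 and 3 are essentially routine given the framework already in place. The genuinely substantive point is Step 2: to rule out that some $\xi$ with $b\ne 0$ or $p\ne 0$ accidentally lies in the degeneracy locus, one has to know that the image of $\varsigma|_\xi$ contains enough of $H^1(L^{\otimes m})\oplus H^1(L^{\vee\otimes m}\otimes\omega_C)$ to separate $b$ and $p$ via Serre duality. This is why the direct-sum decomposition of $\EE_{\cN/\cD}$ arising from the splitting $\cK_1^\bullet \cong \cK_2^\bullet\oplus[\sL^{\otimes m}\to 0]$ at $c=0$ is the crucial input, and the argument would fail without it.
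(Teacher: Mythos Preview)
Your proposal is correct and follows essentially the same approach as the paper. The paper's own proof is a one-line citation of the description of $Deg(\sigma)$ (already computed in Section~4.3 to be $\qme(X,d)\times\Ao$, via exactly the Serre-duality argument you give in Step~2) together with Lemma~\ref{family-cone} for the lifting; you have simply unpacked these references in detail, and your emphasis on the direct-sum splitting of $\EE_{\cN/\cD}$ at $c=0$ is precisely what underlies the paper's bare assertion that $\sigma_0|_\xi=0$ iff $p=b=0$.
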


\begin{proof}
This follows  from the  description of $Deg(\sigma)$ and Lemma \ref{family-cone}.
\end{proof}

 As $[\cV]\virt$ has dimension $\delta+1$, by Corollary \ref{shriek},
   the  dimension of $[\cN]\virt$ is $\delta$.

\begin{defi-prop} We define the localized virtual cycle of $\cN$ to be
$$[\cN]\virt:=0^!_{\bar\sigma_0,\mathrm{loc}}([\bC_{\cN/\cD}])\in A_{\delta} \qme(X,d);
$$
\end{defi-prop}

\subsection{Virtual cycles of $\cN$ and $\qme(X,d)$ }
  In the fiber diagram
\beq\label{v-v}
\begin{CD}
\cngp@>{\gamma}>>  \cB\defeq C(\pi_{\cD\ast}(\sL^{\otimes m}_{\cD}\oplus\sR_{\cD}))\\
@VV{v}V@VVV\\
 \cqg  \black \defeq \qme(X,d) @>>>\cD,
\end{CD}
\eeq
the morphism $\gamma$ pulls back the relative perfect obstruction theory
\begin{equation}\label{relll}
\TT_{{\cB}/\cD}\lra  \EE_{{\cB}/\cD}
\end{equation}
to
$$\phi_{\cngp/\cqg}: \TT_{\cngp/\cqg}\lra\EE_{\cngp/\cqg}:=\gamma^\ast \EE_{{\cB}/\cD}.
$$

 The evaluation maps of $\cngp$ and $\cqg$ fit in (for $\cZ_X$ c.f.  \eqref{cZX})
$$
\begin{CD}
\cC_\cngp@>e_{\mathcal{N}}>>\cX_0@>>>\Vb(\sL^{\otimes m}_{\cD})\times_{\cC_{\cD}}\Vb(\sR_{\cD})\\
@VV{\upsilon_\cC}V @VVV@VVV\\
\cC_\cqg@>e_{\mathcal{Q}}>> \mathcal{Z}_X@>>>\cC_{\cD}
\end{CD}
$$
where the right square is a fiber product of flat morphisms, and $\upsilon_\cC$ is induced by
the vertical arrow $v$ in diagram \eqref{v-v}.

 From the above diagram we have a morphism between exact triangles
$$
\begin{CD}
e_\cngp^\ast \TT_{\cX_0/\mathcal{Z}_X}@>>>e_\cngp^\ast \TT_{\cX_0/\cC_{\cD}}@>>>\upsilon_\cC^\ast e_\cqg^\ast \TT_{\mathcal{Z}_X/\cC_{\cD}}@>{+1}>>\\
@AAA@AAA@AAA\\
\TT_{\cC_\cngp/\cC_\cqg}@>>>\TT_{\cC_\cngp/\cC_{\cD}}@>>> \upsilon_\cC^\ast \TT_{\cC_\cqg/\cC_{\cD}}@>{+1}>>\\
\end{CD}
$$
\black

by the projection formula we have  $\EE_{\cngp/\cqg}\cong R^{\bullet}\pi_{\cngp\ast}e_\cngp^\ast \TT_{\cX_0/\mathcal{Z}_X}$, and
$$
\begin{CD}
\EE_{\cngp/\cqg}@>>>\EE_{\cngp/\cD}@>{h}>>\upsilon^\ast \EE_{\cqg/\cD}@>{+1}>>\\
@AA{\phi_{\cngp/\cqg}}A@AA{\phi_{\cngp/\cD}}A@AA{\upsilon^\ast\phi_{\cqg/\cD}}A\\
\TT_{\cngp/\cqg}@>>>\TT_{\cngp/\cD}@>>>\upsilon^\ast \TT_{\cqg/\cD} @>{+1}>>\\
\end{CD}.
$$
 Thus $\phi_{\cngp/\cqg}: \TT_{\cngp/\cqg}\lra\EE_{\cngp/\cqg}$
is a perfect obstruction theory. Composing the cosection
$\sigma_0:\Ob_{\cngp/\cD}\to\sO_{\cngp}$ with
$H^1(\EE_{\cngp/\cqg})\to H^1(\EE_{\cngp/\cD})$, we obtain
$$\tilde{\sigma}_0:\Ob_{\cngp/\cqg}:=H^1(\EE_{\cngp/\cqg})\lra \sO_{\cngp}.
$$
Using the argument parallel to  Proposition \ref{degQ}, one sees that the degeneracy loci of $\tilde{\sigma}_0$ equals $\cqg\sub\cngp$.

We are in a situation that fits the setup of \cite[Sect.\, 2.2]{CKL}, and we apply \cite[Def.\, 2.8]{CKL} below.
Let $\bC_{\cngp/\cqg}$ be the intrinsic normal cone of $\cngp$ relative to $\cqg$.
Then
$\bC_{\cngp/\cqg}$ is a closed substack of $h^1/h^0(\EE_{\cngp/\cqg})$ by \eqref{relll}.
We denote
$$\Omega:=h^1/h^0(\EE_{\cngp/\cqg})\times_{\cN}\cQ\cup\ker\{\tilde{\sigma}_0:  h^1/h^0(\EE_{\cngp/\cqg})\lra \CC_{\cngp}\}.
$$

The virtual pullback morphism of cosection localized classes
$$\upsilon^!_{\text{loc}}: A\lsta\cqg\lra A\lsta\cqg
$$\black
defined as the composite of
\begin{equation}\label{com}
A\lsta\cqg\lra A\lsta\bC_{\cngp/\cqg}\lra A\lsta \Omega
\mapright{0^!_{h^1/h^0(\EE_{\cngp/\cqg}),\mathrm{loc}}} A\lsta\cQ,
\end{equation}
where the first arrow sends $\sum n_i[V_i]$ to $\sum n_i [\bC_{V_i\times_\cqg \cngp/V_i}]$,
the second arrow is induced by the inclusion $\bC_{\cngp/\cqg}\sub \Omega$, and the third   is the localized Gysin map defined in \cite{KL} via $\tilde{\sigma}_0$. By \cite[Thm.\, 2.9]{CKL} we have
\begin{lemm}\lab{pullback}
$$\upsilon^!_{\text{loc}}([\cqg]^{vir})=[\cN]\virt\in A_{\delta} \cQ.
$$
\end{lemm}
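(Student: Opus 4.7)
The plan is to verify that the statement is an instance of the functoriality of cosection-localized virtual cycles under a virtual pullback, namely \cite[Thm.\, 2.9]{CKL}. Concretely, I would check that the three ingredients required there are in place: (i) a compatible triangle of perfect obstruction theories attached to the tower $\cN\to\cQ\to\cD$; (ii) a relative cosection on $\Ob_{\cngp/\cqg}$ arising by restriction from the absolute cosection $\bar\sigma_0$ on $\Ob_{\cngp/\cD}$; and (iii) a proper degeneracy locus of this relative cosection inside $\cN$.

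For (i), the morphism of distinguished triangles displayed just before the definition of $\tilde\sigma_0$ realizes $\phi_{\cngp/\cqg}$ as a relative perfect obstruction theory compatible with $\upsilon^\ast\phi_{\cqg/\cD}\to \phi_{\cngp/\cD}$, and via the projection formula $\EE_{\cngp/\cqg}\cong R^\bullet\pi_{\cngp\ast}e_{\cN}^\ast \TT_{\cX_0/\mathcal{Z}_X}$ is two-term, so perfectness holds exactly as required by \cite[Sect.\, 2.2]{CKL}. For (ii), $\tilde\sigma_0$ is defined as the composition $H^1(\EE_{\cngp/\cqg})\to H^1(\EE_{\cngp/\cD})\xrightarrow{\sigma_0}\sO_{\cngp}$, so the relative and absolute cosections are linked in the correct way. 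For (iii), the elementary computation of $\mathrm{Deg}(\tilde\sigma_0)$ carried out in parallel with Proposition \ref{degQ} identifies it with $\cqg\subset\cngp$, which is proper.

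Once these compatibilities are in hand, I would trace through the three-step definition of $\upsilon^!_{\mathrm{loc}}$ in \eqref{com}. The first step sends $[\cqg]^{\mathrm{vir}}$ to a specialization class supported on $\bC_{\cngp/\cqg}$; the second embeds this class into $\Omega$; the third applies the localized Gysin map $0^!_{\tilde\sigma_0,\mathrm{loc}}$. The desired identification with $[\cN]\virt = 0^!_{\bar\sigma_0,\mathrm{loc}}[\bC_{\cngp/\cD}]$ rests on the short exact sequence of cone stacks relating $\bC_{\cngp/\cD}$ to $\bC_{\cngp/\cqg}$ and $\upsilon^\ast\bC_{\cqg/\cD}$, analogous to the one used in the proof of Theorem \ref{constancy} via \cite[Prop.\, 3]{KKP}. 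The main obstacle is the bookkeeping verification that this cone sequence is compatible with the descent of $\bar\sigma_0$ to $\tilde\sigma_0$, so that the localized Gysin operator commutes with virtual pullback; once this is checked, the conclusion reduces directly to \cite[Thm.\, 2.9]{CKL} with no further computation.
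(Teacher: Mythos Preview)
Your proposal is correct and follows essentially the same approach as the paper: the paper simply invokes \cite[Thm.\, 2.9]{CKL} directly, relying on the compatibility triangle, the relative cosection $\tilde\sigma_0$, and the identification of its degeneracy locus with $\cqg$ that were established in the preceding discussion. Your write-up spells out the hypothesis-checking more explicitly, but the argument is the same.
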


\subsection{A general comparison}
 To simplify $\upsilon^!_{\text{loc}}([\cqg]^{vir})$, we establish a general comparison, from which the 
Theorem  \ref{theo1} easily follows.

 Let $D$ be a smooth Artin stack over $\CC$, $\pi:\cC\to D$ be a flat family of connected, nodal, genus $g$ curves\footnote{it is allowed to be a family of twisted curves (c.f. \cite[Def.\, 4.1.2]{AV})}.
 Let $\cE$ be a locally free sheaf on $\cC$ together with an  isomorphism
  \beq\label{tauiso}\tau:\cE\mapright{\cong}  \cE\dual\otimes \omega_{\cC/D}\eeq
where $\omega_{\cC/D}$ is the dualizing sheaf.   Let
$$B:=C(\pi\lsta \cE)$$
be the direct image cone stack and $\rho:B\to D$ the natural morphism. Let $(\cC_{B},\pi_{B})$ be the universal family.
Then by Proposition \ref{deformation}
there is a relative perfect obstruction theory
$$\phi_{B/D}:\TT_{B/D}\lra \EE_{B/D}:=\rho\sta R\bul\pi\lsta(\cE).$$

  Composing $\tau$ with
  the pairing $\Vb(\cE)\times_\cC \Vb(\cE\dual\otimes\omega_{\cC/D})\mapright{(,)}\Vb(\omega_{\cC/D})$ induces canonically
 a  representable morphism of stacks
\begin{align*}
\bar h : \Vb(\cE)\lra \Vb(\omega_{\cC/D}),
\quad \bar h(x)=\frac{1}{2}(x,\tau(x)),
\end{align*}
for $x\in \Vb(\cE)$. The $\bar h$ also induces a morphism between tangent complexes
$$
d\bar h:\TT_{\Vb(\cE)/\cC}\to \bar h^*\TT_{\Vb(\omega_{\cC/D})/\cC}.
$$
 Pull back to $\cC_{B}$ via evaluation map $\ee_{B}:\cC_{B}\to\Vb(\cE)$ and apply $R^{\bullet}\pi_{B}$. We have
\begin{align*}
\begin{CD}
\sigma_{B}^\bullet:\EE_{{B}/D}\lra R^\bullet\pi_{{B}\ast}(\ee_{B}^\ast
\bar h^\ast \Omega\dual_{\Vb(\omega_{\cC/D})/\cC})\cong R^\bullet\pi_{{B}\ast}(\omega_{\cC_{B}/{B}}).
\end{CD}
\end{align*}
 It induces the following morphism:
$$\sigma_{B}:  \Ob_{B/D}=\rho\sta R^1\pi\lsta(\cE)\lra \sO_{B}.
$$

 A coordinate expression of $\sigma_{B}$ is as follows.
  For each scheme $T\to D$, let  $\cC_T=\sC\times_D T\to T$ and $\cE_T$ over $\cC_T$  be the family along $T$ via pullback.  For every $$u\in H^0(\cC_T,\cE_T),\qquad \tau(u)\in
H^0(\cC_T,\cE^{\vee}_T\otimes\omega_{\cC_T/T}),$$ and the Serre pairing
$$
H^0(\cC_T,\cE^{\vee}_T\otimes\omega_{\cC_T/T})\times H^1(\cC_T,\cE_T)\mapright{(,)} H^1(\cC_T,\omega_{\cC_{T}/T}).
$$ For arbitrary $\mathring
u\in H^1(\cC_T,\cE_T)$, one has $\tau(\mathring u)\in H^1(\cC_T,\cE^{\vee}_T\otimes\omega_{\cC_T/T})$, and
  \begin{align}\label{sigmaM}
 \sigma_B(u,\mathring u)=\frac{(\tau(u),\mathring u)+(\tau(\mathring u),u)}{2}=(\tau(u),\mathring u)\in H^1(\cC_T,\omega_{\cC_{T}/T})  \cong \Gamma(T,\sO_T),\end{align}
 where we used $(\tau(u),\mathring u)=(\tau(\mathring u),u)$ by direct checks from definition. By Serre duality and \eqref{tauiso}, the (reduced part of) degeneracy loci of $\sigma_B$ equals that of $D$. \black
\black
  The distinguished triangle
$\rho^\ast\LL_{D}\to\LL_{B}\to\LL_{B/D}\to\rho^\ast\LL_{D}[1]$
gives a morphism
$\rho^\ast\TT_{D}\to \TT_{B/D}[1]$, which
composed with $\phi_{B/D}:\TT_{B/D}\to \EE_{B/D}$ gives
$$\eta: \rho^\ast\TT_{D}\lra  \EE_{B/D}[1].
$$
Taking the cokernel of  the $H^0(\eta)$   we obtain the absolute obstruction sheaf
$$\Ob_{B}\defeq \coker\{ H^0(\eta): \rho^\ast\Omega_{D}\dual\lra  H^1(\EE_{B/D})\}.$$

\begin{coro}
The cosection $\sigma_B:\Ob_{B/D}\to \sO_{B}$ lifts to    $\bar\sigma_B: \Ob_{B}\to \sO_{B}$.
\end{coro}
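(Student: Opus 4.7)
The plan is to replicate verbatim the argument of Lemma \ref{cone} and Corollary \ref{lift}, adapted to the self-dual datum $(\cE,\tau)$. By construction $\Ob_B=\coker\{H^0(\eta):\rho^*\TT_D\to\Ob_{B/D}\}$, so producing the lift $\bar\sigma_B$ is equivalent to verifying the vanishing
$$\sigma_B\circ H^0(\eta)=0:\rho^*H^0(\TT_D)\lra\sO_B.$$
Once this vanishing is established, the universal property of the cokernel immediately factors $\sigma_B$ through $\Ob_B$, producing $\bar\sigma_B$.

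First, I would lift the desired vanishing to the derived level: the composite above is the $H^1$ of the chain
$$\rho^*\TT_D[-1]\xrightarrow{\zeta^\vee}\TT_{B/D}\xrightarrow{\phi_{B/D}}\EE_{B/D}\xrightarrow{\sigma_B^\bullet}R\pi_{B*}\omega_{\cC_B/B},$$
where $\zeta:\LL_{B/D}\to\rho^*\LL_D[1]$ is the connecting map of the relative cotangent triangle. This reduces the problem to the verbatim analogue of Lemma \ref{cone} for the potential $\frac{1}{2}(x,\tau(x))$ in place of $pq(u)$.

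Second, I would check this vanishing by a local cocycle calculation parallel to the proof of Lemma \ref{cone}. Over a chart $T\to B$ with universal section $u\in H^0(\cC_T,\cE_T)$, a base tangent vector $v\in H^0(\TT_D|_T)$ is carried by $\zeta^\vee\circ\phi_{B/D}$ to the obstruction $\eta(v)\in H^1(\cC_T,\cE_T)$ for extending $u$ along the deformation of $(\cC,\cE)$ classified by $v$. By the explicit formula \eqref{sigmaM}, $\sigma_B$ evaluates $(u,\eta(v))$ to the Serre pairing $(\tau(u),\eta(v))\in H^1(\cC_T,\omega_{\cC_T/T})\cong\Gamma(T,\sO_T)$. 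The symmetry $(\tau(a),b)=(\tau(b),a)$ noted after \eqref{sigmaM}, combined with the fact that $\eta(v)$ arises as a coboundary of the Kodaira--Spencer class of $v$ applied to the fixed $u$, forces this expression to cancel against its transpose and vanish. This is exactly the cancellation mechanism in Lemma \ref{cone}, where the Leibniz rule for $\partial(pq(u))/\partial(u,p)$ plays the role now played by $\tau$-symmetry.

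The main obstacle will be the derived-level vanishing of the second step: one must match the connecting map $\zeta^\vee$ with an honest cocycle representative for $\eta(v)$ and then check that pairing symmetrically with $\tau(u)$ produces a coboundary in $\omega_{\cC_T/T}$. Beyond this single computation, the rest of the argument is a purely formal consequence of the cokernel definition of $\Ob_B$, and the symmetric-pairing identity built into $(\cE,\tau)$ is precisely what is needed to push the Lemma \ref{cone} argument through in this general setup.
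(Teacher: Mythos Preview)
Your overall strategy matches the paper exactly: the paper's proof is the single sentence ``parallel to Corollary~\ref{lift}'', and you correctly unwind this as (i) the lift exists iff $\sigma_B\circ H^0(\eta)=0$, and (ii) this follows from the analogue of Lemma~\ref{cone}, namely $H^1(\sigma_B^\bullet\circ\phi_{B/D})=0$.

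Where you diverge is in the mechanism proposed for (ii). Your sketch invokes the $\tau$-symmetry $(\tau(a),b)=(\tau(b),a)$ together with a ``cancellation against its transpose''. But that symmetry is already absorbed in deriving the formula \eqref{sigmaM} for $\sigma_B$; it does not by itself force $(\tau(u),\eta(v))$ to vanish, and the coboundary heuristic you give is not a proof. The actual argument behind Lemma~\ref{cone} (from \cite[Lemm.~3.6]{CL}) does not use $\tau$ at all: the map $\bar h:\Vb(\cE)\to\Vb(\omega_{\cC/D})$ induces a morphism of direct image cones $h_\ast:B\to B':=C(\pi_\ast\omega_{\cC/D})$, and by functoriality of Proposition~\ref{deformation} the composite $\sigma_B^\bullet\circ\phi_{B/D}$ is identified with the pullback along $h_\ast$ of $\phi_{B'/D}:\TT_{B'/D}\to\EE_{B'/D}$. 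Since $\pi_\ast\omega_{\cC/D}$ is locally free, $B'$ is a vector bundle over $D$, hence $B'/D$ is smooth and $H^1(\TT_{B'/D})=0$; the required vanishing follows immediately. This is both simpler and more robust than a cocycle computation, and it is the content the paper intends when it says ``parallel''.
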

\begin{proof}
 The proof is parallel to  Corollary \ref{lift}, we omit it here. \black
\end{proof}

 \black
 Denote $\bE:=h^1/h^0(\EE_{B/D})$. Suppose we have a representable morphism $R\to D$ from a DM stack $R$. Over the fiber product
 \beq\label{model}\begin{CD}
 N@>\mathfrak{b}>>B\\
 @VV{\rho_R}V@VV{\rho}V\\
 R@>>>D
 \end{CD} \eeq
 let $\bE_R:=\mathfrak{b}\sta \bE$ and let $\ti\sigma:\bE_R \to \sO_N$ be the pullback of $\sigma_{B}$. We define a cosection localized virtual pullback
$\rho_{R.\text{loc}}^!:A\lsta(R)\lra A\lsta(R)$
as the composition of \begin{align*}
\rho_{R.\text{loc}}^!:A\lsta R\mapright{\Upsilon}A\lsta\bC_{N/R}\mapright{\iota\lsta} A\lsta \bG
\mapright{0^!_{\bE_R,\mathrm{loc}}} A\lsta R,
\end{align*}
where $\Upsilon$
sends a cycle class
$\sum n_i[V_i]$ to $\sum n_i [\bC_{V_i\times_R N/V_i}]$,
$\bG:=N\times_{B} h^1/h^0(\EE_{B/D})(\sigma_{B})$, and
$0^!_{\bE_R,\mathrm{loc}}$ is the localized Gysin map defined in \cite{KL} with respect to the pullback of $\sigma_{B}$.

\begin{prop}\label{main1}For each integral closed substack $Z$ of $R$, we have
$$\rho_{R.\text{loc}}^!([Z])=(-1)^{\mu}[Z],$$
where  $\mu=min_{z\in Z}\{h^0(\cC_z,\cE|_{\cC_z})\} $ where for $z$ runs as closed points of $Z$, $\cC_z=z\times_D \cC$, and $\cE|_{\cC_z}$ denotes the pullback via $\cC_z\to \cC$.
\end{prop}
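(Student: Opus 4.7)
The plan is to exploit the self-duality $\tau$ to identify the degeneracy locus with the zero section, reduce to a model situation via semicontinuity, perform an explicit localized Euler class calculation there, and propagate the resulting identity by a deformation argument. First I would identify $\mathrm{Deg}(\sigma_B)$ using the coordinate formula \eqref{sigmaM}: at $(z,u)\in B$ the cosection is the linear form $\mathring u\mapsto(\tau(u),\mathring u)$ on $H^1(\cC_z,\cE|_{\cC_z})$, which (since the Serre duality pairing is perfect and $\tau$ is an isomorphism) vanishes identically in $\mathring u$ if and only if $u=0$. Hence $\mathrm{Deg}(\sigma_B)=D\subset B$ (the zero section) and $\mathrm{Deg}(\tilde\sigma)=R\subset N$, consistent with $\rho_{R,\mathrm{loc}}^!([Z])\in A_*(R)$ being supported on $Z$.

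Next I would reduce to a model situation on a dense open and perform the key Euler-class computation. By upper semicontinuity of $z\mapsto h^0(\cC_z,\cE|_{\cC_z})$, the substack $U:=\{h^0=\mu\}\subset Z$ is open and dense. Over $U$, both $V:=\pi_*\cE|_U$ and $R^1\pi_*\cE|_U\cong V^\vee$ (via $\tau$ and Serre duality) are locally free of rank $\mu$, so $p\colon N|_U\to U$ identifies with the rank-$\mu$ bundle $\mathrm{Total}(V)\to U$. The pulled-back cosection on the obstruction bundle $F:=p^*V^\vee$ at $(z,u)$ is the pairing $\mathring u\mapsto\langle u,\mathring u\rangle$, i.e.\ $\sigma=s^\vee$ where $s=u\colon\cO_{N|_U}\to p^*V$ is the tautological regular section with zero locus the zero section $U\hookrightarrow N|_U$. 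Since $N|_U\to U$ is smooth, $[\bC_{N|_U/U}]$ corresponds to the fundamental class $[N|_U]$ in $h^1/h^0(\EE_{B/D}|_{N|_U})$, so Kiem-Li's localized Gysin map computes the cosection-localized Euler class of $F$ via $s^\vee$. By the identity $e_{\mathrm{loc}}(F,s^\vee)=(-1)^{\mathrm{rk}F}[Z(s)]$—a consequence of $c_r(F)=(-1)^r c_r(F^\vee)$ together with the standard Koszul calculation for a regular section $s\in\Gamma(F^\vee)$—this equals $(-1)^\mu[U]\in A_*(U)$.

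The main obstacle is extending the identity $\rho_{R,\mathrm{loc}}^!([Z])=(-1)^\mu[Z]$ from the dense open $U$ to all of $Z$. My plan is to adapt the constancy-of-virtual-cycles strategy of Theorem \ref{constancy}: deform $(\cC|_Z,\cE|_Z,\tau)$ in a one-parameter family over $Z\times\AA^1$, preserving the self-duality, to a limit where $\pi_*\cE$ has constant fiber dimension $\mu$ everywhere (so the model calculation above applies directly). The cosection-localized virtual pullback is constant along such a deformation by the compatible-obstruction-theory mapping-cone argument used in the proof of Theorem \ref{constancy}, and since the generic rank $\mu$ is stable under deformation the sign $(-1)^\mu$ persists. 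Alternatively, one could globally resolve $\pi_*\cE|_Z$ by a two-term complex of vector bundles compatible with $\tau$ (possibly after an étale cover) and run the Euler-class calculation directly on this resolution.
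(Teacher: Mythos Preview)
Your reduction to the dense open $U$ where $h^0$ is constant and the Euler-class calculation there are essentially the paper's approach as well; the paper does the same (passing to an \'etale chart, shrinking so that $U$ is smooth and $R^\bullet\pi_*\cE|_U$ splits), and carries out the $(-1)^\mu$ computation by an explicit blow-up following Kiem--Li's definition rather than quoting a formula.

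The genuine gap is your extension from $U$ to $Z$. Your proposed deformation argument would require producing a one-parameter family of self-dual sheaves $(\cE_t,\tau_t)$ over $\cC|_Z\times\AA^1$ whose special fiber has $\pi_*\cE_0$ everywhere locally free of rank $\mu$. There is no reason such a deformation exists: the jump locus of $h^0$ is a genuine closed condition on $Z$ determined by the geometry of the curves $\cC_z$ and the bundle $\cE$, and one cannot in general ``smooth it out'' by deforming $\cE$ while keeping $\tau$ intact (and deforming the curves themselves would change $Z$). The constancy argument of Theorem~\ref{constancy} does not help here because there one is comparing fibers of a \emph{given} family over $\AA^1$, not manufacturing a new family with prescribed limiting behavior. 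Your alternative---a global two-term locally free resolution of $R^\bullet\pi_*\cE|_Z$ compatible with $\tau$---amounts to asking for a self-dual presentation of a perfect complex, which is again not automatic.

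The paper's extension step is much simpler and sidesteps all of this. The isomorphism $\tau$ forces $\chi(\cC_z,\cE|_{\cC_z})=0$ for every $z$, so $\EE_{B/D}$ has virtual rank zero and $\rho^!_{R,\mathrm{loc}}$ preserves dimension. Since $Z$ is integral and the construction applied to $[Z]$ produces a class supported on $Z$, one has $\rho^!_{R,\mathrm{loc}}([Z])=c\,[Z]$ for some $c\in\QQ$ purely by dimension. The paper then checks that the composite defining $\rho^!_{\cdot,\mathrm{loc}}$ commutes with \'etale restriction $j^*$ step by step, giving $j^*\rho^!_{Z,\mathrm{loc}}([Z])=\rho^!_{U,\mathrm{loc}}([U])$; your computation on $U$ then forces $c=(-1)^\mu$. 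So you should replace the deformation/resolution paragraph with this two-line dimension-plus-pullback argument.
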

 \begin{proof}
  For  arbitrary  \'etale  $j:U\to Z$  we   take fiber products
\begin{align*}
\begin{CD}
V @>j_U>>W @>j_Z>>N\\
@V{\ff} VV@VVV@VV{\rho_R}V\\
U@>{j}>>Z@>{\subset}>>R.
\end{CD}
\end{align*}

Let $ \bE_Z:=j_Z^*\bE_R\mapright{\ti\sigma_1} \sO_{W}$
and $ \fN_U:=j_U^*j_Z^*\bE_R\mapright{\ti\sigma_2} \sO_{V}$  be the pullback of $\bE_R\mapright{\ti\sigma} \sO_{N}.$ Denote $\bG_Z:=\bG\times_{N}W$ and $\bG_U:=\bG\times_{N}V$. Then analgous to \eqref{com}, we have a commutative diagram
\beq\label{diag0}
\begin{CD}
\rho_{Z,loc}^!:A\lsta(Z)@>>>A\lsta(\bC_{W/Z})@>>>A\lsta (\bG_Z)@>0^!_{\bE_Z,loc}>>A\lsta (Z)\\
@VVj^*V @VVV @VVV@VVj^*V\\
\rho_{U,loc}^!:A\lsta(U)@>>>A\lsta(\bC_{V/U})@>>>A\lsta (\bG_U)@>0^!_{\fN_U,loc}>>A\lsta (U),
\end{CD}
\eeq

 The first square above commutes trivially. The second square commutes by \cite[Prop.\, 1.7]{Ful}. The third commutes because in  \cite[(2.1)]{KL}  the steps (i) intersection with divisors and (ii)  pushforward via
 cosection-regularizing map  are both commutative with the pullback by  \'etale  morphisms. 
  Therefore
 \begin{align*} j^*\rho^!_{Z,loc}([Z])=\rho^!_{U,loc}([U]). \end{align*}

 In Proposition \ref{main1} let us pick $U$ as follows. The morphism
 $Z\to D$ induces $\pi_Z:\cC_Z:=\cC\times_D Z\to Z$ and let $\cE_Z$ be the pull-back of $\cE$.
Then $\cE_Z$ is a locally free sheaf on $\cC$ together with, by \eqref{tauiso},
  $$\tau_Z:\cE_Z\mapright{\cong} \cE_Z\dual\otimes \omega_{\cC_Z/Z}.$$
   The function $\psi:Z\to\ZZ$ sending  $x\in Z$ to
 $$\psi(x)=\dim H^1( C_x, \cE_Z|_{C_x})$$ is upper semi-continuous. Thus we can pick an  \'etale  chart $U\to Z$ such that $\psi |_U$ is constant and is the smallest number  in $\psi (Z)$.
 This implies the $\bE_U:=R\bul\pi_{Z\ast}\cE_Z$ has cohomogies
 $H^0(\bE_U),H^1(\bE_U)$ locally free.  By shrinking $U$ smaller
  we may assume $U$ is a smooth variety and
 $$ \bE_U\cong [H^0(\bE_U)\mapright{0} H^1(\bE_U)]. $$
  Denote $j:U\to Z$ to be the inclusion.

 We see that in \eqref{diag0} $V$ is the total space of the locally free sheaf $(\pi_{Z\ast}\cE_Z)|_{U}=H^0(\bE_U)$.
 By $\tau_Z$ and Serre duality   $$ H^1(\bE_U)=\big( R^1\pi_{Z\ast}\cE_Z\big)|_U \cong (\pi_{Z\ast}\cE_Z)|_{U}\dual\cong V\dual$$
 implies $\bE_U\cong [V\mapright{0}V\dual]$, and also
$$
\fN_U=j_U^*j_Z^*\bE_R\cong \ff\sta h^1/h^0(\bE_U)=[\ff\sta V\dual/\ff\sta V]
$$ where the quotient is by zero action. Applying the composition \eqref{com} step by step,  that $
\Upsilon (U)=[V/f^*V]$ implies
$$\rho^!_{R,loc}(U)=0^!_{\fN_U,\mathrm{loc}}([V/\ff^*V])=0^!_{\ff^*V^{\vee},\mathrm{\sigma_U}}([V]),
$$
(c.f. the proof in \cite[Prop.\, 5.6]{CL1}), where $\sigma_U: H^1(\bE_U)=\ff\sta V\dual\to\sO_U$ is the pullback
of $\sigma_B$.  By \eqref{sigmaM} $\sigma_U$ is exactly the pairing with
the tautological section $\bx\in\Gamma(V,\ff\sta V)$.
To calculate the cosection localization $0^!_{\ff^*V^{\vee},\mathrm{\sigma_U}}([V])$, we follow the definition \cite[Def.\, 2.3]{KL}.
Brief $F=\ff\sta V$ and $r=\rank F$, then $\sigma_U$ is a cosection $\sigma_U:F\dual  \to \sO_V$
induced by $\sO_V\mapright{\bx} F$ . Let $R=Bl_{U}V$ and $\alpha:R\to V$ be the blow up of $V$ along the zero section $U$ of $V\to U$. Denote $E=\alpha^{-1}(U)$ as the exceptional divisor.
Then the zero loci of $\alpha\sta \bx\in \Gamma(R,\alpha\sta\ff\sta V)$ is $E$. Denote $\alpha(\sigma):E\cong \PP(V)\to V$ as the restriction of $\alpha$.
The exact sequence $F\dual\mapright{\sigma_U} \sO_V\to\sO_U\to 0$ pullbacks to the exact
$$\alpha\sta F\dual \mapright{\alpha\sta\sigma_U} \sO_R\to\sO_E\to 0.$$
Thus $\alpha\sta\sigma_U$ factors through a surjective $\ti\sigma: \ti F\dual:=\alpha\sta F\dual\to \sO_R(-E)$. Let $\ti G=ker\ \ti\sigma$. Then
\begin{align*} \ti G\dual|_E\cong coker\{\sO_R(E)|_E\to \alpha(\sigma)\sta F\}\cong T_{E/U}\otimes_E\sO(-1) \end{align*}
 and  by the construction in  \cite[(2.1)]{KL}
\begin{align*}
0^!_{\ff^*V^{\vee},\mathrm{\sigma_U}}([V])&=\alpha(\sigma)\lsta ([-E] \cap 0^!_{\ti G}[R])=(-1)\alpha(\sigma)\lsta \bigg( e(\ti G|_E) \cap [E] \bigg)  \\
&= (-1)(-1)^{r-1} \alpha(\sigma)\lsta e(T_{E/U}\otimes_E \sO(-1))=(-1)^r [U].
\end{align*}
 Therefore \beq\label{final} \rho^!_{R,loc}(U)=(-1)^r [U].\eeq
 As  $Z$ is integral
 by dimension reason  $\rho^!_{Z,loc}(Z)=c[Z]$ for some $c\in \QQ$, where
  \eqref{diag0} and \eqref{final} forces $c=(-1)^r$. This proves Proposition \ref{main1}.
\end{proof}

\black

 \subsection{}
 
  We are ready to prove Theorem \ref{theo1} from the theorem \ref{main1}.

\begin{proof}[Proof of Theorem \ref{theo1}] We claim
$$[\cpgp]\virt=[\cN]\virt=(-1)^{md+1-g}[\cqg]^{vir}.$$

The first equality is from Corollary \ref{shriek}. For the second identity, compare \eqref{v-v} with
\eqref{model} by letting $D=\cD,B=\cB, R=\cqg$,
$\cE=\sL^{\otimes m}_{\cD}\oplus\sR_{\cD} $,
and
$\tau:\cE\to  \omega_{\cC_{\cD}/\cD}\otimes \cE\dual $ is determined by $\sL^{\otimes m}_{\cD}{\cong}\omega_{\cC_{\cD}/\cD}\otimes \cP_{\cD}^{-1}$,  $\cP_{\cD}{\cong}\omega_{\cC_{\cD}/\cD}\otimes \cL_{\cD}^{-m}$. Under such setup  $N=\cN$ and $\bE_R\cong h^1/h^0(\EE_{\cN/\cQ})$.

 The $\sigma_B$ defined in \eqref{sigmaM} admits the following description.
  For each closed point $\xi\in B$   represented by
$$(u_1,u_2)\in   H^0(C,L^{\otimes m})\times H^0(C,L^{\vee\otimes m}\otimes \omega_C)
$$
where $(C,p_1,\cdots,p_\ell ,L)\in \cD$ is the point under $\xi$,
for arbitrary $u_1\in H^1(C,L^{\otimes m})$ and $u_2\in
H^1(C,L^{\vee\otimes m}\otimes \omega_C)$, the cosection is
$\sigma_B|_{\xi}(\mathring u_1,\mathring u_2)=u_1\cdot\mathring
u_2+u_2\cdot\mathring u_1 $. A family version of above implies $\gamma\sta\sigma_B$
coincides with the $\sigma_0$ defined below \eqref{si-V} (c.f.Section
4.3). \black
 Denote $[\cQ]^{vir}=\sum_im_i[\cQ_i]$, where $\cQ_i$ is an integral substack. By Lemma \ref{pullback} and Theorem \ref{main1}
$$
[\cN]\virt=v^!_{loc}([\cqg]^{vir})=\sum_i(-1)^{\mu_i}m_i[\cQ_i]=(-1)^{md+1-g}[\cqg]^{vir},
$$
where $\mu_i=h^0(C,\cE|_C)=\chi(C,L^{\otimes m})=md+1-g$.
\end{proof}

\section{Appendix}

 All the arguments in the paper can be extended to the complete intersection in $\PP^n$ cases.

 Let $X\subset\PP^n$ be a smooth complete intersection defined by equations $q_1\cdots,q_r$ with $\deg q_i=m_i$. Let $M:=\qme(\PP^n,d)$ be the moduli of genus $g$ degree $d$ $\epsilon$-stable quasimaps to $\PP^n$. We denote by $(\mathcal{C}_M, \pi_M)$ be the universal family of $\qme(\PP^n,d)$, and $\mathscr{L}_{M}$ be the university line bundle. Denote
$$
\mathscr{P}^i_{M}:=\mathscr{L}_{M}^{\vee\otimes m_i}\otimes\omega_{\mathcal{C}_M/M},\quad i=1,\cdots, r.
$$

 Using direct image cone construction, we form the DM stack $$\overline{\mathcal{P}}:=C(\oplus_i\pi_{M*}\mathscr{P}^i_{M}).$$  
 Letting $\pi_{\overline{\mathcal{P}}}:\cC_{\overline{\mathcal{P}}}\to \overline{\mathcal{P}}$ and $\sL_{\overline{\mathcal{P}}}$ be the universal curve and line bundle.   The obstruct theory of $\overline{\mathcal{P}}$ is

 $$\phi_{\overline{\mathcal{P}}/\cD}:\TT_{\overline{\mathcal{P}}/\cD}\lra \EE_{\overline{\mathcal{P}}/\cD}\defeq
R^\bullet\pi_{\overline{\mathcal{P}} \ast} (\sL^{\oplus (n+1)}_{\overline{\mathcal{P}}}\bigoplus\oplus_i\sP^i_{\overline{\mathcal{P}}}).
$$

We define a multi-linear bundle morphism from $q_i$
\begin{equation}\label{co2}
h_1: \Vb(\sL^{\oplus (n+1)}_{\cD} \bigoplus\oplus_i\sP^i_{\cD})\lra \Vb(\omega_{\cC_{\cD}/\cD}),
\quad h_1(x,p)=\sum_ip_i\cdot q_i(x),
\end{equation}
 It induces the following cosection:
$$\overline{\sigma}_m:  \Ob_{\overline{\mathcal{P}}/\cD}=R^1\pi_{\overline{\mathcal{P}} \ast}\sL^{\oplus (n+1)}_{\overline{\mathcal{P}}} \bigoplus\oplus_i R^1\pi_{\overline{\mathcal{P}}\ast} \sP^i_{\overline{\mathcal{P}}}\lra \sO_{\overline{\mathcal{P}}}.
$$
the degeneracy loci of $\overline{\sigma}_m$ is $\qme(X,d)$. The same argument as Theorem \ref{theo1} implies
\begin{theo}\label{theo2}
For $g\ge0$, $\epsilon>0$ and $\ell$ be a nonnegative integer,
$$
[\overline{\mathcal{P}}]\virt_{\mathrm{loc}}=(-1)^{(\sum_i m_i) d+1-g}[\qme(X,d)]\virt\in A_{*} \qme(X,d).
$$
\end{theo}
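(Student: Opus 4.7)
The plan is to follow the proof of Theorem \ref{theo1} verbatim, with the single polynomial $q$ replaced by the sequence $q_1,\dots,q_r$ and the single $P$-field replaced by a tuple $(\mathfrak{p}_1,\dots,\mathfrak{p}_r)$. First I would verify that the degeneracy locus of the cosection $\overline{\sigma}_m$ defined in \eqref{co2} is exactly $\qme(X,d)$: at a closed point $(C,L,u,(p_i))$, varying $(\mathring{p}_i)$ gives $q_i(u)=0$ for each $i$ by Serre duality, and varying $(\mathring{u}_j)$ gives $\sum_i p_i\,(\partial q_i/\partial x_j)(u)=0$ on $C\setminus B$. Since $X$ is a smooth complete intersection, the Jacobian matrix $[\partial q_i/\partial x_j]$ has rank $r$ on $X\setminus\{0\}$, so each $p_i$ vanishes on $C\setminus B$ and hence everywhere. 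Properness of $\qme(X,d)$ and Kiem--Li cosection localisation then produce the class $[\overline{\mathcal{P}}]^{\mathrm{vir}}_{\mathrm{loc}}\in A_*\qme(X,d)$.

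The second step builds the one-parameter degeneration $\cV$ of Section 4 in the $r$-equation setting, parametrising data $((C,p_1,\dots,p_\ell),L,(u_j),t,(y_i)_{i=1}^r,(\mathfrak{p}_i)_{i=1}^r)$ subject to the $r$ constraints $q_i(u)-t\,y_i=0$. The fibre at $t\ne 0$ recovers $\overline{\mathcal{P}}$ because the constraints solve for the $y_i$, while the fibre $\cN$ at $t=0$ consists of $\epsilon$-stable quasimaps to $X$ together with free sections $y_i\in H^0(L^{\otimes m_i})$ and $\mathfrak{p}_i\in H^0(L^{\vee\otimes m_i}\otimes\omega_C)$. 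Constructing $\cV$ via the direct image cone of Proposition \ref{deformation} yields a canonical morphism $\phi_{\cV/\DD}\colon\TT_{\cV/\DD}\to \EE_{\cV/\DD}$; the complete-intersection analog of Lemma \ref{tor} is that on the nondegeneracy locus of $u$ the Jacobian $[\partial q_i/\partial x_j]$ has maximal rank $r$, so $d\kappa$ is fibrewise surjective there, $\cH^1$ of the relative tangent complex is torsion, and $\EE_{\cV/\DD}$ lies in amplitude $[0,1]$. Restricting at $t\ne 0$ and $t=0$ recovers the obstruction theories of $\overline{\mathcal{P}}/\cD$ and $\cN/\cD$, mirroring Proposition \ref{ob-WV}.

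A family cosection $\sigma\colon \Ob_{\cV/\DD}\to \sO_\cV$ is then assembled from the pairings $\sL^{\otimes m_i}_\cD\otimes\sL^{\vee\otimes m_i}_\cD\otimes\omega_{\cC_\cD/\cD}\to\omega_{\cC_\cD/\cD}$ summed over $i$, exactly as in Section 4.3, and its degeneracy locus is $\qme(X,d)\times\Ao$. The proof of Theorem \ref{constancy} uses only formal properties of relative obstruction theories together with the decomposition of $\TT_{\DD/\cD}$, so it carries over unchanged and yields $[\overline{\mathcal{P}}]^{\mathrm{vir}} = [\cN]^{\mathrm{vir}}$ in $A_*\qme(X,d)$.

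Finally I would invoke Proposition \ref{main1} for the Cartesian square replacing \eqref{v-v}, with $\cE:=\bigoplus_{i=1}^{r}\bigl(\sL^{\otimes m_i}_\cD\oplus\sL^{\vee\otimes m_i}_\cD\otimes\omega_{\cC_\cD/\cD}\bigr)$ and the Serre-duality isomorphism $\tau$ swapping the two summands in each pair. Over an integral substack of $\qme(X,d)$ generic vanishing of the higher cohomology gives $h^0(C,\cE|_C)=\sum_{i=1}^{r}h^0(C,L^{\otimes m_i})$, whose parity determines the sign $(-1)^{(\sum_i m_i)d+1-g}$ asserted in the theorem, so $[\cN]^{\mathrm{vir}}=(-1)^{(\sum_i m_i)d+1-g}[\qme(X,d)]^{\mathrm{vir}}$ as claimed. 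The main obstacle is the complete-intersection version of Lemma \ref{tor} in the second step: one must confirm that the family $\cX\to\cC_\cD$ cut out by the $r$ equations $q_i(u)-t\,y_i=0$ is flat and that its relative tangent complex has torsion $\cH^1$, which is exactly where the smoothness of $X$ (equivalently, the regularity of the sequence $q_1,\dots,q_r$) enters.
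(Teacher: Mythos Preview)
Your outline is precisely what the paper does: its entire ``proof'' of Theorem \ref{theo2} is the one sentence ``The same argument as Theorem \ref{theo1} implies,'' and each of your four steps (degeneracy locus, family $\cV$ with $r$ equations $q_i(u)-ty_i=0$, constancy via Theorem \ref{constancy}, and the comparison via Proposition \ref{main1} with $\cE=\bigoplus_i(\sL_\cD^{\otimes m_i}\oplus\sL_\cD^{\vee\otimes m_i}\otimes\omega_{\cC_\cD/\cD})$) is the direct $r$-fold analogue of the corresponding step for a single hypersurface.

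One arithmetic point deserves a second look. In your final paragraph you compute, at a generic point with $h^1(L^{\otimes m_i})=0$, that
\[
h^0(C,\cE|_C)=\sum_{i=1}^{r}h^0(C,L^{\otimes m_i})=\sum_{i=1}^{r}\chi(C,L^{\otimes m_i})=\Bigl(\textstyle\sum_i m_i\Bigr)d+r(1-g),
\]
and more generally the parity of $h^0(C,\cE|_C)=\sum_i\bigl(h^0(L^{\otimes m_i})+h^1(L^{\otimes m_i})\bigr)$ is always that of $\sum_i\chi(L^{\otimes m_i})=(\sum_i m_i)d+r(1-g)$. This does \emph{not} in general have the same parity as the exponent $(\sum_i m_i)d+1-g$ written in the statement (they differ when $r$ and $g$ are both even). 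Your argument is internally consistent and yields the sign $(-1)^{(\sum_i m_i)d+r(1-g)}$; the discrepancy appears to be a typo in the stated exponent rather than a flaw in your reasoning, so you should record the exponent your own computation actually produces rather than silently matching the printed one.
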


\end{document}